\documentclass[psamsfonts, reqno, 12pt]{amsart}
\usepackage{amsfonts,amsmath, amsthm, amssymb, latexsym, epsfig}
\usepackage[all]{xy}
\usepackage{xspace}
%\usepackage{setspace}
%\doublespacing
\usepackage[mathscr]{eucal}

%%%%%%%% Wide Page 1%%%%%%%%%%
	\topmargin 30pt
	\advance \topmargin by -\headheight
	\advance \topmargin by -\headsep

	\textheight 8.2in
	
	\oddsidemargin 40pt
	\evensidemargin \oddsidemargin
	\marginparwidth 0.5in

	\textwidth 5.5in

%%%%%%%%% Wide Page 2 %%%%%%%%
	
%	\topmargin 30pt
%	\advance \topmargin by -\headheight
%	\advance \topmargin by -\headsep

%	\textheight 8.5in

%	\oddsidemargin 0pt
%	\evensidemargin \oddsidemargin
%	\marginparwidth 0.5in

%	\textwidth 6.5in

\RequirePackage{amsfonts}
\RequirePackage{amsmath}
\RequirePackage{amsthm}
\RequirePackage{amssymb}
\RequirePackage{latexsym}
\RequirePackage{epsfig}
\RequirePackage{verbatim}
\RequirePackage[mathscr]{eucal}
\RequirePackage[all]{xy}

	\newcommand{\thin}{ \mbox{H}^2(S, D^*)}

	% Acknowledgements is an environment
           
        \theoremstyle{plain}
	\newtheorem{thm}{Theorem}[section]
	\newtheorem{lemma}[thm]{Lemma}
	\newtheorem{theorem}[thm]{Theorem}
	
	\newtheorem{corollary}[thm]{Corollary}

	\theoremstyle{definition}
	\newtheorem{definition}[thm]{Definition}
	
	\newtheorem{remark}[thm]{Remark}
	
	\newtheorem{example}[thm]{Example}

	\numberwithin{equation}{section}
	\numberwithin{figure}{section}

\begin{document}
	\title{Square-Free Rings And Their Automorphism Group}
	\author{Martin Montgomery}
        \address{Department of Mathematics \\
        Piedmont College \\
        165 Central Ave. \\
        Demorest, GA 30535}
        \email{mmontgomery@piedmont.edu}
        \date{\today}
	
%AMS info

	\keywords{Square-Free Rings, Automorphism, Nonabelian Cohomology}
	\subjclass[2000]{Primary: 16P20}

	\begin{abstract}

Finite-dimensional square-free $K$-algebras have been completely characterized by Anderson and 
D'Ambrosia as certain semigroup algebras $A \cong K_{\xi} S$ over a square-free semigroup $S$ twisted by some $\xi \in Z^2(S, K^*)$, a two-dimensional cocycle of $S$ with coefficients in the group of units $K^*$ of $K$.   D'Ambrosia extended the definition of square-free to artinian rings with unity and showed every square-free ring has an associated division ring $D$ and square-free semigroup $S$.  We show a square-free ring $R$ can be characterized as a semigroup ring  over a square-free semigroup $S$ twisted by some $(\alpha, \xi) \in Z^2(S, D^*)$, a two-dimensional cocycle of $S$ with coefficients in the nonabelian group of units $D^*$ of a division ring $D$.  Also, to each square-free ring $R \cong D^{\alpha}_{\xi} S$ there exists a short exact sequence
\begin{align*}
1 \longrightarrow H^1_{(\alpha, \xi)} (S, D^*)\longrightarrow  \mbox{Out }R
\longrightarrow  \mbox{Stab}_{(\alpha, \xi)} (\mbox{Aut } S) \longrightarrow 1.
\end{align*}
connecting the outer automorphisms of $R$ to certain cohomology groups related to $S$ and $D$.

	\end{abstract}
        \maketitle
        
 \section*{Introduction}
 The study of outer automorphisms of certain algebraic structures, beginning with Stanley \cite{Stanley} and continued by Scharlau \cite{Scharlau}, Baclawski \cite{Baclawski}, and Coelho \cite{Coelho}, led to a characterization of the outer automorphism group of square-free algebras by Anderson and Dambrosia \cite{And-D'Amb}.  There, Clark's \cite{Clark} cohomology of semigroups, was adapted to prove to every square-free algebra there is an associated short exact sequence relating the first cohomology group with the outer automorphisms of the algebra.  Later, their approach was adopted by Sklar \cite{sklar1} to obtain results about the class of binomial algebras, which includes the class of square-free algebras.
 
Square-free rings were defined by D'Ambrosia \cite{D'Ambrosia} as a generalization of square-free algebras.  She developed the basic properties of square-free rings and provided examples of square-free rings that are not of the form $D \otimes_K A$, where $D$ is a division ring with center $K$ and $A$ is a square-free $K$-algebra.  

Presently, we will combine the above approaches to relate a square-free ring $R$ to an associated short exact sequence with middle term the outer automorphisms of $R$.  Utilizing ideas from non-abelian cohomology (see Dedecker \cite{Dedecker} or Mac Lane \cite{Mac Lane} ) we further extend the cohomology of semigroups from \cite{And-D'Amb}and \cite{sklar1} to a setting related to square-free rings. 

In section 1, we review some of the basic properties of a square-free ring $R$.  These will include the existence of a canonical division ring $D$ as well as an associated square-free semigroup $S$ that plays an important role in the structure of $R$.  Investigating the ring properties of $R$, we start to see a connection to non-abelian cohomology.  In section 2, we develop cohomological ideas useful to characterize the structure of a square-free ring as related to the \textit{thin 2-cohomology} $\thin$ of \cite{Dedecker}.  By making use of the natural action of the semigroup automorphisms $\mbox{Aut }S$ on $\thin$, we completely determine when two square-free rings are isomorphic.  Finally, in section 3 we define several more ideas related to non-abelian cohomology and proceed to show for every square-free ring, there is a short exact sequence with middle term the outer automorphism group of $R$.
 
 \section{Square-free rings}

	Throughout this paper, $R$ will denote a basic indecomposable artinian ring with identity and with Jacobson radical $J=J(R)$.  We will denote by $E=\{e_1, e_2, \ldots, e_n\}$ a basic set of idempotents for $R$ and note the modules
	$$\overline{Re} \cong Re/Je \hspace{.5cm} \mbox{and} \hspace{.5cm} \overline{eR} \cong eR/eJ$$
with $e \in E$ are irredundant sets of the isomorphism classes of simple left and right modules of $R$.
Since $R$ is artinian, each finitely generated $R$-module $M$ has a composition series.  For $e_i \in E$, we denote by
 	$$c_i(M)$$
the multiplicity of the simple modules $\overline{Re_i} \cong Re_i/Je_i$ or $\overline{e_iR} \cong e_iR/e_iJ$ in a composition series for $M$.
 
 A finitely generated indecomposable $R$-module is \textit{square-free} if 
 	$$c_i(M) \leq 1$$
for all $e_i \in E$.  A finitely generated module is square-free if each of its indecomposable direct summands is square-free.  A ring $R$ is left square-free if ${}_R R$ is square-free; similarly, $R$ is right square-free if $R_{R}$ is square-free.  We say $R$ is \textit{square-free} if it both left and right square-free.

By Theorem 1.4 of \cite{D'Ambrosia}, when $R$ is square-free, $e_iRe_i$ is a division ring for all $e_i \in E$ and for all $e_i, e_j \in E$,
	$$ \dim_{e_iRe_i} (e_iRe_j) \leq 1.$$
For $R$ a connected ring, it follows there are ring isomorphisms $ e_i Re_i \cong e_j Re_j$ for all $e_i, e_j \in E$.  Therefore, we speak of \textit{the} division ring (unique up to isomorphism) for a square-free ring $R$ and denote it by $D$.

Associated to any artinian ring is a digraph called the \textit{left quiver}.  The left quiver of $R$ is a directed graph $\Gamma$ with vertex $E$ and $h_{ij}=c_i (Je_j/J^2e_j)$ arrows
\begin{equation*}
\xymatrix{ e_j  \ar[r]_(.45){\alpha_{k}} &  e_i & \hspace{1cm} k=1, \ldots, h_{ij},~~~1 \leq i,~j \leq n.}
\end{equation*}
The \textit{right quiver} of $R$ is defined analogously.  By Lemma 2.1 of \cite{D'Ambrosia}, for $R$ square-free, the left and right quivers are digraph duals.  For convenience, we will focus on the left quiver $\Gamma$ of $R$.  Given any finite directed graph $\Gamma$, there is a natural semigroup associated to $\Gamma$, called the \textit{path semigroup of $\Gamma$}.  Associated to every square-free ring is an \textit{algebra semigroup} called a \textit{square-free semigroup} (see \cite{sklar1} and \cite{D'Ambrosia} for further details) $S$ with zero element $\theta$ and a set of pairwise orthogonal idempotents $E \subseteq S$ such that

\begin{itemize}
\item[(a)] $S=\bigcup_{e_i, e_j \in E} e_i Se_j$;~~\mbox{and}
\item[(b)] $|e_i Se_j \setminus \theta | \leq 1~~\mbox{for each}~~e_i, e_j \in E.$
\end{itemize}
We may view $S$ as a subset of $R$.  To avoid any confusion in the notation, we will denote semigroup multiplication of $s$ and $t$ by $s \cdot t$ and ring multiplication of $s$ and $t$ by $st$.  As a subset of $R$, $S$ is not actually a semigroup, but is \textit{nearly multiplicatively closed} in the following sense; for all $s, t \in S$, there is a unique $d \in D$ with
	$$ st=d (s \cdot t).$$
Since above $d$ is uniquely determined by $s$ and $t$ we will designate it by $\xi(s, t)$ so that
	$$st=\xi(s, t) s \cdot t.$$

Let $R$ be a square-free ring.  For convenience, we often will enumerate the idempotents in $E$ as $e, f, g, \ldots$.  Note by definition of $\xi$, if $s=e \cdot s \cdot f$, it follows then that $\xi(s, f)=1$ and $\xi(e, s)=1$.
Let $s=e \cdot s  \cdot f \in R$.  Since $\mbox{dim}_{eRe} eRf=1=\mbox{dim}_{fRf} eRf$, for any $frf \in fRf$ there exists a unique $er'e \in eRe$ such that
	$$ er'e s=s frf$$
This correspondence defines a function $\alpha: fRf \longrightarrow eRe$.  For a particular $s$ we denote the map by $\alpha_s$.  

%\label{ringcocycle}
\begin{remark} \label{ringcocycle}
Let $s=e s  f$ and $t=f  t g$.  Since $R$ is associative,
	\begin{align*}
	(e s)( t grg)&=(e s)(\alpha_{t} (grg) t)\\ \\
	&=\alpha_{s} (\alpha_{t} 		(grg)) st 
	=\alpha_{s} (\alpha_{t} (grg)) \xi(s, t) s \cdot  t
	\end{align*}
 and
 	\begin{align*}
	(e s)( t grg)&= ( st) grg=(\xi(s, t) s \cdot t) grg\\ \\
	&=\xi(s, t) \alpha_{s \cdot t} (grg) s \cdot t.
	\end{align*}
Since $s, t$ and $grg$ are arbitrary, $\alpha$ has the property that 
	$$\alpha_{s} \circ \alpha_{t} (x) =\xi(s, t) \alpha_{s \cdot t} (x) [\xi(s, t)]^{-1}.$$
\end{remark}
	
Similar to previous treatments of square-free rings (see \cite{D'Ambrosia}), for $d \in D$, we will denote by $\rho_d$ the inner automorphism of  $D$ given by
	$$ \rho_d: x \longrightarrow d x d^{-1}.$$
It follows then that $\alpha$ has the property
	$$\alpha_{s} \circ \alpha_{t}=\rho_{\xi(s, t)} \circ \alpha_{s \cdot t}.$$

 \section{Semigroup Cohomology}\label{cohomology}

In \cite{Clark}, a cohomology with coefficient set in an abelian group was developed for a special class of algebra semigroups.  This cohomology was generalized \cite{And-D'Amb} and later extended \cite{sklar1} to all algebra semigroups.  We now consider the case when the coefficient set is non-abelian.

Let $D$ be a non-abelian group and let $\mbox{Aut}(D)$ be the automorphism group of $D$.  Let $S^{<0>}=E$, and for each $n>0$, define
	$$S^{<n>}=\{ (s_1, s_2, \ldots, s_n) \in S^n |~s_1 \cdot s_2 \cdots s_n \neq \theta \}.$$
For $n \geq 0$ and $G$ an arbitrary (not necessarily abelian) group, let $F^n(S, G)$ denote the set of all functions from $S^{<n>}$ to $G$.  These sets are groups under multiplication and are abelian exactly when $G$ is abelian.

The following set we will designate as \textit{2-cochains}:
	\begin{align*}
	F^1(S, Aut (D)) \times F^2 (S, D^*)
	\end{align*}	

We are especially interested in 2-cochains $(\alpha, \xi)$ which satisfy a ``cocycle identity".
For $s, t, u \in S^{<3>}$, consider all pairs $(\alpha, \xi) \in \mathscr{C}^2 (S,C)$ satisfying the following properties:

	%\label{2cochain}
	\begin{align}\label{2cochain}
	\alpha_{s} (\xi(t, u) )~ \xi(s, t \cdot u)=\xi(s, t) \xi(s \cdot t, u)
	\end{align}
and
	%\label{twistauto}
	\begin{align} \label{twistauto}
	\alpha_{s} \circ \alpha_{t}=\rho_{\xi(s, t)} \circ \alpha _{s \cdot t}.
	\end{align}
We call such pairs \textit{2-cocycles} and denote the set of all 2--cocycles by $Z^2(S, D^*)$.  Note by Remark \ref{ringcocycle}, to any square-free ring $R$ there is a pair $(\alpha, \xi)$ satisfying Equation \ref{twistauto}.  Since $R$ is associative, a straightforward check shows Equation \ref{2cochain} is also satisfied.

%cohomologous
There is a group action $*$ of  $F^0(S, Aut (D)) \ltimes F^1 (S, D^*)$ on $Z^2 (S, D^{*})$ given by
\begin{align*}
& (\mu, \eta) * (\alpha, \xi)=(\beta, \zeta) \\ \\
& \mu_e \circ \beta_s \circ \mu_f^{-1} =\rho_{\eta(s)} \circ \alpha_s, \hspace{1cm} \mu_e(\zeta(s, t))=\eta(s) \alpha_{s} (\eta(t)) \xi (s, t) \eta( s \cdot t)^{-1}
\end{align*}
for $s=e \cdot s \cdot f$, $t=f \cdot t \cdot g \in S$. 
Altering slightly a definition from \cite{Dedecker}, we call the orbits under $*$ the \textit{thin 2--cohomology} and denote these orbits by
$$\thin .$$

\begin{definition}
We say that two cocycles $(\alpha, \xi)$ and $(\beta, \zeta)$ are \textit{cohomologous} if there is a $(\mu, \eta) \in F^0(S, Aut (D)) \ltimes F^1 (S, D^*)$ such that for all $s, t \in S^*$
\begin{equation} \label{class1}
\mu_e \circ \beta_s \circ \mu_f^{-1}=\rho_{\eta(s)} \circ \alpha_s
\end{equation}
and
\begin{equation}\label{class2}
\mu_e(\zeta(s, t))=\eta(s) \alpha_{s} (\eta(t)) \xi(s, t) \eta(s \cdot t)^{-1}
\end{equation}
for $s=e \cdot s \cdot f$, $t=f \cdot t \cdot g \in S$.  We will write $[\alpha, \xi]=[\beta, \xi]$ for cohomologous cycles.
\end{definition}

We say that a cocyle $(\alpha, \xi) \in Z^2 (S, D^*)$ is \textit{normal} if we have $\alpha_e=1_D$ and $\xi( e, e)=1$ for every $e \in E$.  If $(\alpha, \xi)$ is a 2--cocyle, we may define
$$\eta(s)=\begin{cases} \xi(s, s)^{-1}  & \quad \mbox{if} \quad s \in E \vspace{.2cm}   \cr  1 & \quad \mbox{otherwise} 
\end{cases}$$

Then $(1_D, \eta)*( \alpha,  \xi)$ is a normal cocycle that is cohomologous with $(\alpha, \xi)$.  
Hence, every cohomology class $[\alpha, \xi] \in \thin$ has a normal representative.

Adopting the notation of \cite{D'Ambrosia}, for square-free semigroups $S$ as above, we write
$$R \cong D_{\xi}^{\alpha} S$$
for the left $D$-vector space on $S^*$ with multiplication defined by
$$st=\begin{cases} \xi(s, t)s \cdot t  & \quad \mbox{if} \quad s \cdot t \neq \theta \vspace{.2cm}   \cr  0 & \quad \mbox{otherwise} 
\end{cases}$$
for all $s, t \in S^*$, and
$$ s d=\alpha_s (d)$$
for all $s \in S^*$ and $d \in D$.  Extending the multiplication by linearity gives a square-free ring $R$ with associated division ring $D$ and semigroup $S$.

We can use the group $\mbox{Aut} (S)$ of semigroup automorphisms to define another action on $Z^2 (S, D^*)$.  Given $\phi \in \mbox{Aut} (S)$ and 2-cocycle $(\alpha, \xi) \in Z^2 (S, D^*)$, we can, as in \cite{Anderson}, \cite{D'Ambrosia} and \cite{sklar1}, define a new 2-cocycle $(\alpha^{\phi}, \xi^{\phi}) \in Z^2(S, D^*)$ by
$$\alpha^{\phi} (s)=\alpha (\phi(s))=\alpha_{\phi(s)} \hspace{1cm} \xi^{\phi} (s, t)=\xi( \phi(s), \phi(t))$$
for all $s, t \in S$ with $s \cdot t \neq \theta$.  

If $(\mu, \eta) \in F^0(S, Aut (D)) \ltimes F^1 (S, D^*)$ with
$$(\mu, \eta) * (\alpha, \xi)=(\beta, \zeta)$$
and we define $\mu^{\phi}(s)=\mu_{\phi(e)}$ and $\eta^{\phi}(s)=\eta(\phi(s))$ for $s=e \cdot s $ then
$$ (\mu^{\phi},  \eta^{\phi}) *(\alpha^{\phi}, \xi^{\phi} )= (\beta^{\phi}, \zeta^{\phi})$$
Hence, $[\alpha, \xi]=[\beta, \zeta]$ if and only if $[\alpha^{\phi}, \xi^{\phi}]=[\beta^{\phi}, \zeta^{\phi}]$ and $\mbox{Aut}(S)$ acts on $\thin$.  

We can then extend a result from \cite{Anderson}, \cite{D'Ambrosia} and \cite{sklar1}:

%Lemma 2.5 of Sklar?
\begin{lemma} \label{rightiso}
Let $S$ be a square-free semigroup, and let $[\alpha, \xi], [\beta, \zeta] \in H^2(S, D^*)$.  If 
	$$[\beta, \zeta] = [\alpha^{\phi}, \xi^{\phi}]$$
for some $\phi \in \mbox{Aut }S$, then as rings, we have
	$$D^{\beta}_{\zeta} S \cong D^{\alpha}_{\xi}S.$$
It follows then, for each $(\alpha, \xi) \in Z^2(S, D^*)$ there exists a normal $(\beta, \zeta) \in Z^2(S, D^*)$ with $D^{\beta}_{\zeta} S \cong D^{\alpha}_{\xi}S$.
\end{lemma}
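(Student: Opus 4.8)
The plan is to build the ring isomorphism as a composite of two maps: a relabeling of the $D$-basis $S^*$ by the semigroup automorphism $\phi$, and a rescaling-and-retwisting governed by the cohomology data $(\mu,\eta)$. The hypothesis $[\beta,\zeta]=[\alpha^{\phi},\xi^{\phi}]$ means exactly that $(\beta,\zeta)$ and $(\alpha^{\phi},\xi^{\phi})$ lie in a single $*$-orbit, so I fix a pair $(\mu,\eta)\in F^0(S,\mbox{Aut}(D))\ltimes F^1(S,D^*)$ with $(\mu,\eta)*(\alpha^{\phi},\xi^{\phi})=(\beta,\zeta)$, and I aim to produce isomorphisms exhibiting $D^{\beta}_{\zeta}S\cong D^{\alpha^{\phi}}_{\xi^{\phi}}S\cong D^{\alpha}_{\xi}S$, which I then compose.

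For the relabeling I would define $\Psi\colon D^{\alpha^{\phi}}_{\xi^{\phi}}S\to D^{\alpha}_{\xi}S$ by $\Psi(ds)=d\,\phi(s)$ for $d\in D$, $s\in S^*$, extended additively. Since $\phi$ is a semigroup automorphism it restricts to a bijection of $S^*$ fixing $\theta$, so $\Psi$ carries the $D$-basis to the $D$-basis bijectively. Multiplicativity on monomials $ds,\,d't$ then follows by moving the interior scalar past the basis element via the rule $s\,d''=\alpha_s(d'')\,s$ in each ring and invoking the defining identities $\alpha^{\phi}_s=\alpha_{\phi(s)}$, $\xi^{\phi}(s,t)=\xi(\phi(s),\phi(t))$, together with $\phi(s\cdot t)=\phi(s)\cdot\phi(t)$ and the fact that $s\cdot t=\theta$ iff $\phi(s)\cdot\phi(t)=\theta$; both sides reduce to $d\,\alpha_{\phi(s)}(d')\,\xi(\phi(s),\phi(t))\,\phi(s\cdot t)$.

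The content of the lemma lies in the second map. I would define $\Phi\colon D^{\beta}_{\zeta}S\to D^{\alpha^{\phi}}_{\xi^{\phi}}S$ by $\Phi(d\,s)=\mu_e(d)\,\eta(s)\,s$ for $s=e\cdot s\cdot f$, extended additively; this is additive because each $\mu_e$ is an automorphism of the division ring $D$, and it is bijective because each $\mu_e$ is invertible and each $\eta(s)\in D^*$. To verify multiplicativity I would expand $\Phi((ds)(d't))$ and $\Phi(ds)\,\Phi(d't)$ for composable $s=e\cdot s\cdot f$, $t=f\cdot t\cdot g$, using $s\,d''=\alpha^{\phi}_s(d'')\,s$ in the target to bring every scalar to the left of $s\cdot t$. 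After cancelling the common left factor $\mu_e(d)$, comparing coefficients of $s\cdot t$ leaves a single relation required to hold for all $d'\in D$; evaluating it at $d'=1$ isolates exactly Equation \ref{class2}, and the residual relation for general $d'$ is exactly Equation \ref{class1} (both read with $(\alpha,\xi)$ replaced by $(\alpha^{\phi},\xi^{\phi})$). The main obstacle is the noncommutative bookkeeping: the factors $\eta(s)$, $\alpha^{\phi}_s(\eta(t))$, $\xi^{\phi}(s,t)$ and the conjugation $\rho_{\eta(s)}$ must be kept in the correct order, cancellations performed only consistently on one side by invertible elements, and it is precisely the two relations defining the $*$-action that force every discrepancy to vanish. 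Composing $\Psi\circ\Phi$ gives $D^{\beta}_{\zeta}S\cong D^{\alpha}_{\xi}S$.

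For the final assertion I would take $\phi=\id$, so that $\alpha^{\id}=\alpha$ and $\xi^{\id}=\xi$. The normalization recorded just before the lemma shows that $(\beta,\zeta):=(1_D,\eta)*(\alpha,\xi)$, with $\eta(s)=\xi(s,s)^{-1}$ for $s\in E$ and $\eta(s)=1$ otherwise, is a normal cocycle cohomologous to $(\alpha,\xi)$; hence $[\beta,\zeta]=[\alpha,\xi]=[\alpha^{\id},\xi^{\id}]$. Applying the case of the lemma just established then produces a normal $(\beta,\zeta)$ with $D^{\beta}_{\zeta}S\cong D^{\alpha}_{\xi}S$, as required.
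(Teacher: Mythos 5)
Your proposal is correct and takes essentially the same route as the paper: your composite $\Psi\circ\Phi$ sends $ds$ to $\mu_e(d)\,\eta(s)\,\phi(s)$, which is exactly the single map $\gamma$ that the paper defines and verifies directly, with the multiplicativity check resting on the same two relations defining the $*$-action; factoring through the intermediate ring $D^{\alpha^{\phi}}_{\xi^{\phi}}S$ is only an organizational difference. Your treatment of the final assertion (taking $\phi=\id$ together with the normalization $(1_D,\eta)*(\alpha,\xi)$ constructed before the lemma) correctly supplies the step the paper's proof leaves implicit.
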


\begin{proof}
Since $[\beta, \zeta]= [\alpha^{\phi}, \xi^{\phi}] $, there is a pair $(\mu, \eta) \in F^0 (S, \mbox{Aut}(D)) \ltimes F^1(S, D^*)$ such that 
\begin{equation}
\mu_e \circ \beta_s \circ \mu_f^{-1} =\rho_{\eta(s)} \circ \alpha_s^{\phi} 
\end{equation}
and 
\begin{equation}
\mu_e( \zeta(s, t))=\eta(s) \alpha_{s}^{\phi} ( \eta(t)) \xi^{\phi} (s, t)\eta(s \cdot t)^{-1}
\end{equation}
are satisified for $(s, t) \in S^{<2>}$ with $s=e \cdot s \cdot f$.
Define a map $\gamma: D^{\beta}_{\zeta} S \rightarrow D^{\alpha}_{\xi} S$ by
	$$\gamma( d s)=\mu_e (d) \eta(s) \phi(s)$$
and extend linearly.  We claim $\gamma$ is a ring isomorphism.  Certainly, $\gamma$ is one-to-one, onto, and preserves addition.  Let $d_1 s, d_2 t \in D_{\zeta}^{\beta}S$ with $s=e\cdot s \cdot f, t=f\cdot t \cdot g \in S$ and consider $\gamma(d_1 s d_2 t)$:
\begin{align*}
\gamma(d_1 s d_2 t)&=
\gamma(d_1 \beta_s(d_2) s t)=\gamma(d_1 \beta_s(d_2) \zeta(s, t) s \cdot  t) \\ \\
&=\mu_e (d_1 \beta_s(d_2) \zeta(s, t) ) \eta(s \cdot t) \phi(s \cdot t) \\ \\
&=\mu_e (d_1) [\mu_e( \beta_s(d_2))][\mu_e( \zeta(s, t)) ] \eta(s \cdot t) \phi(s \cdot t) \\ \\
&=\mu_e (d_1)[\mu_e \circ \beta_s \circ \mu_f^{-1} ( \mu_f (d_2))] [\eta(s) \alpha^{\phi}_s( \eta(t)) \xi^{\phi}(s, t)\eta(s \cdot t)^{-1}] \eta(s \cdot t) \phi(s \cdot t) \\ \\
&=\mu_e (d_1) [ \eta(s)  \alpha^{\phi}_s ( \mu_f((d_2)) \eta(s)^{-1}] \eta(s) \alpha^{\phi}_s( \eta(t)) \xi^{\phi}(s, t) \phi(s \cdot t) \\ \\
&=\mu_e(d_1) \eta(s) \alpha_{\phi(s)} ( \mu_f(d_2))  \alpha_{\phi(s)}( \eta(t)) \xi (\phi(s), \phi(t)) \phi(s \cdot t)\\ \\
&=\mu_e(d_1) \eta(s) \alpha_{\phi(s)}( \mu_f(d_2))  \alpha_{\phi(s)}( \eta(t))\phi(s ) \phi( t)  \\ \\
&= \mu_e(d_1) \eta(s) \phi(s )  \mu_f(d_2) \eta(t) \phi( t) =\gamma(d_1 s) \gamma(d_2 t)
\end{align*}
Hence, $\gamma$ is a ring isomorphism.
\end{proof}

For $S$ a square-free semigroup and 2-cocycles $( \alpha, \xi),~(\beta, \zeta) \in Z^2( S, D^*)$, we say a ring isomorphism $\gamma: D^{\beta}_{\zeta} S \rightarrow D^{\alpha}_{\xi} S$ is \textit{normal} if $\gamma(E)=E$.  Then as in \cite{sklar1},  we have the following:

\begin{lemma} \label{goodmap}
Let $S$ be a square-free semigroup, let $[ \alpha, \xi]$ and $[\beta, \zeta]$ be normal 2-cocycles in $ Z^2( S, D^*)$, and let $\gamma$ be a normal ring isomorphism from $D^{\beta}_{\zeta} S$ to $D^{\alpha}_{\xi} S$.  Then there exists an automorphism $\phi$ of the semigroup $S$ and   $(\mu, \eta ) \in F^0(S, Aut (D)) \ltimes F^1 (S, D^*)$  such that
	$$ \gamma(ds)=\mu_e(d) \eta(s) \phi (s)$$
for $s=e \cdot s \cdot f \in S^*$.
\end{lemma}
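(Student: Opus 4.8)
The plan is to read $\phi$, $\mu$, and $\eta$ directly off the way $\gamma$ acts on the idempotents and on the one-dimensional corners of the two rings, and then to invoke the square-free hypothesis to force $\phi$ to be multiplicative. Write $R_{\beta}=D^{\beta}_{\zeta}S$ and $R_{\alpha}=D^{\alpha}_{\xi}S$. Since $\gamma$ is normal, $\gamma(E)=E$, so $\gamma$ restricts to a bijection of the idempotent set $E$; for $e\in E$ put $e'=\gamma(e)$.

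First I would analyze the corners. For $s=e\cdot s\cdot f\in S^*$ one has $es=s=sf$ in $R_{\beta}$ (using $\zeta(e,s)=1=\zeta(s,f)$), so $s$ spans the corner $eR_{\beta}f$, which by property (b) is the one-dimensional left $D$-space $Ds$. Applying the ring isomorphism $\gamma$ and $\gamma(e)=e'$, $\gamma(f)=f'$ gives $\gamma(eR_{\beta}f)=e'R_{\alpha}f'$; as $\gamma$ is bijective this corner is nonzero, so by property (b) there is a unique $s'\in e'Sf'\setminus\theta$ with $e'R_{\alpha}f'=Ds'$. I then define $\phi(s)=s'$ and, since $\gamma(s)\in Ds'$, define $\eta(s)\in D^*$ by $\gamma(s)=\eta(s)\phi(s)$ (with $\eta(s)\neq 0$ because $\gamma(s)\neq 0$ and $\phi(s)\neq\theta$). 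Likewise $\gamma$ carries the division ring $eR_{\beta}e=De$ isomorphically onto $e'R_{\alpha}e'=De'$; because both cocycles are normal these corners are ring-isomorphic to $D$ via $d\mapsto de$ and $d\mapsto de'$, so the composite is an automorphism $\mu_e\in\Aut(D)$ determined by $\gamma(de)=\mu_e(d)e'$. This yields $(\mu,\eta)\in F^0(S,\Aut(D))\ltimes F^1(S,D^*)$.

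Next I would verify the displayed formula. Since $ds=(de)s$ in $R_{\beta}$, multiplicativity of $\gamma$ gives $\gamma(ds)=\gamma(de)\gamma(s)=(\mu_e(d)e')(\eta(s)\phi(s))$, and a short computation in $R_{\alpha}$—using that $e'$ is a left identity for $e'R_{\alpha}f'$ together with normality of $(\alpha,\xi)$—collapses this to $\mu_e(d)\eta(s)\phi(s)$, as required.

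The main work, and the step I expect to be the real obstacle, is showing $\phi$ is a semigroup automorphism. Injectivity and surjectivity follow from bijectivity of $\gamma$ (and normality of $\gamma^{-1}$). For multiplicativity, take $s=e\cdot s\cdot f$ and $t=f\cdot t\cdot g$ and compute $\gamma(st)$ in two ways. On one hand $st=\zeta(s,t)(s\cdot t)$, so the formula just proved gives $\gamma(st)=\mu_e(\zeta(s,t))\,\eta(s\cdot t)\,\phi(s\cdot t)$ with $\phi(s\cdot t)\in e'Sg'$. On the other hand $\gamma(st)=\gamma(s)\gamma(t)=(\eta(s)\phi(s))(\eta(t)\phi(t))$, which expands, via the rule $s'd=\alpha_{s'}(d)s'$ and the multiplication of $R_{\alpha}$, to a $D^*$-multiple of $\phi(s)\cdot\phi(t)\in e'Sg'$. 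When $s\cdot t\neq\theta$ both sides are nonzero, and since $|e'Sg'\setminus\theta|\le 1$ the two semigroup elements must coincide, giving $\phi(s\cdot t)=\phi(s)\cdot\phi(t)$; when $s\cdot t=\theta$ both products vanish, forcing $\phi(s)\cdot\phi(t)=\theta$. Hence $\phi\in\Aut(S)$, and together with the previous paragraph this establishes $\gamma(ds)=\mu_e(d)\eta(s)\phi(s)$, completing the argument.
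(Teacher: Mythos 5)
Your proposal is correct and takes essentially the same approach as the paper: both arguments read $\phi$, $\eta$, and $\mu$ directly off the one-dimensional corners $eR_{\beta}f \to \gamma(e)R_{\alpha}\gamma(f)$, using square-freeness of $S$ to pin down the unique semigroup element supporting $\gamma(s)$ and then defining $\mu_e$ via $\gamma(de)$. The only difference is one of detail, not of method: you spell out the multiplicativity of $\phi$ by expanding $\gamma(st)$ two ways and invoking $|\gamma(e)\cdot S\cdot\gamma(g)\setminus\theta|\leq 1$, a step the paper compresses into the assertion that $\phi\in\mbox{Aut }S$ ``since $\gamma$ is a ring isomorphism.''
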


\begin{proof}
First, we show for every $s \in S^*$, there is a unique element $\phi (s) \in S^*$ such that
$\gamma(s) \in D\phi (s)$.  Since $\gamma: D^{\beta}_{\zeta} S \rightarrow D^{\alpha}_{\xi} S$ is a ring isomorphism, we must have
	$$\gamma(s)=\sum_{t \in S^*} d_t t$$
with $d_t \in D$.  But for some $e, f \in E$, $s=e\cdot s \cdot f$ and 
	$$\gamma(s)=\gamma(e) \gamma(s) \gamma(f)=\gamma(e) \left( \sum_{t \in S^*} d_t t \right) \gamma(f).$$
Since $d_t t=t \alpha^{-1}_t (d_t)$ for all $d_t$ and $t$, it follows that
	$$\gamma(s)=\left( \sum_{t \in S^*} d_t   \gamma(e) \cdot  t \cdot  \gamma(f) \right).$$
But since $S$ is square-free and $\gamma$ is an isomorphism, $|  \gamma(e) \cdot  S^* \cdot \gamma(f) | =1$.  Denote by $t_s $ the unique element in $S^*$ such that $ \gamma(e)\cdot  t_s \cdot  \gamma(f)$ whenever $s=e \cdot s \cdot f$ and define $\phi(s)$ by
	$$\phi (s)=t_s$$
and $\phi (\theta)=\theta$.  Since $\gamma$ is a ring isomorphism, it follows that $\phi \in \mbox{Aut }S$.
Also, $\gamma(s) =d_s\phi (s)$ for some unique $d_s \in D$.  Define $\eta: S^* \rightarrow D^*$
by
	$$\eta (s)=d_s.$$
Then $\eta \in F^1 (S, D^*)$.  Now for any $d \in D$, we must have
$$\gamma(d s)=\gamma(d e \cdot s)=\gamma(de)\gamma(s)=\gamma(de)\eta(s) \phi(s) \in D \phi(s)$$
and it follows that $\gamma(de) \in D$.
Define $\mu$ by
$$\mu_e(d)=\gamma(de)$$
for every $e \in E$ and $d$ in $D$.  Again, since $\gamma$ is an isomorphism, it follows that $\mu_e \in \mbox{Aut}(D)$.  Then $\mu \in F^0(S, \mbox{Aut}(D))$ and 
$$ \gamma(ds)=\mu_e(d) \eta(s) \phi(s)$$
as desired.
\end{proof}
Suppose $e \in E$.  As above, $\gamma(e)=\gamma(e^2)=\gamma(e)\gamma(e)$ and the map $\eta \in F^1(S, D^*)$ from Lemma \ref{goodmap} has the property
$$ \eta(e) \phi(e)= \eta(e) \phi(e)=\eta(e) \phi(e) \Rightarrow \eta(e) =\eta(a) \alpha_e( \eta(e))=[\eta(e)]^2.$$
Hence, $\eta(e)=1$ for all $e \in E$.

We now have the following theorem:

\begin{theorem} \label{leftiso}
Let $S$ be a square-free semigroup, and let $[\alpha, \xi],~[\beta, \zeta] \in H^2(S, D^*)$.  Then
	$$ D^{\beta}_{\zeta} S \cong D^{\alpha}_{\xi} S$$
if and only if there exists an automorphism $\phi \in \mbox{Aut} (S)$ such that
	$$[\alpha^{\phi}, \xi^{\phi}]=  [\beta, \zeta] $$

\end{theorem}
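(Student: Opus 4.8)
The plan is to establish the theorem as an ``if and only if'' by recognizing that one direction is already available and the other follows by packaging the two preceding lemmas together with the normalization procedure. The key observation is that Lemma \ref{rightiso} already delivers the reverse implication: if $[\alpha^{\phi}, \xi^{\phi}] = [\beta, \zeta]$ for some $\phi \in \mbox{Aut}(S)$, then $D^{\beta}_{\zeta} S \cong D^{\alpha}_{\xi} S$ as rings. So the entire content of the proof is the forward direction, and I would state the reverse direction in a single sentence citing Lemma \ref{rightiso}.

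For the forward direction, assume $\gamma : D^{\beta}_{\zeta} S \to D^{\alpha}_{\xi} S$ is a ring isomorphism. The first step is to reduce to the case of \emph{normal} cocycles and a \emph{normal} isomorphism, since Lemma \ref{goodmap} requires both. By the normalization remark preceding Lemma \ref{rightiso} (using the action of $(1_D, \eta)$ with $\eta(s) = \xi(s,s)^{-1}$ on idempotents), every class $[\alpha, \xi]$ and $[\beta, \zeta]$ has a normal representative, and replacing the cocycles by these representatives changes neither the isomorphism class of the ring nor the cohomology classes. I would then arrange that $\gamma$ carries $E$ to $E$: since $\gamma$ is a ring isomorphism it must send the basic set of idempotents of $D^{\beta}_{\zeta} S$ to a basic set of idempotents of $D^{\alpha}_{\xi} S$, and after composing with a suitable inner automorphism (conjugation by a unit permuting idempotents) one may assume $\gamma(E) = E$, i.e.\ $\gamma$ is normal.

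With a normal isomorphism between normal cocycles in hand, apply Lemma \ref{goodmap}: there exist $\phi \in \mbox{Aut}(S)$ and $(\mu, \eta) \in F^0(S, \mbox{Aut}(D)) \ltimes F^1(S, D^*)$ with $\gamma(ds) = \mu_e(d)\,\eta(s)\,\phi(s)$ for $s = e \cdot s \cdot f$. The final step is to extract the cohomology relation by forcing $\gamma$ to respect multiplication. Running the multiplicativity computation $\gamma(d_1 s \, d_2 t) = \gamma(d_1 s)\gamma(d_2 t)$ exactly as in the proof of Lemma \ref{rightiso} but read \emph{backwards}, and comparing the coefficient of the basis element $\phi(s \cdot t)$ on both sides, yields precisely the two defining identities
\begin{equation*}
\mu_e \circ \beta_s \circ \mu_f^{-1} = \rho_{\eta(s)} \circ \alpha^{\phi}_s, \qquad
\mu_e(\zeta(s,t)) = \eta(s)\,\alpha^{\phi}_s(\eta(t))\,\xi^{\phi}(s,t)\,\eta(s \cdot t)^{-1},
\end{equation*}
which say exactly that $(\mu, \eta) * (\alpha^{\phi}, \xi^{\phi}) = (\beta, \zeta)$, hence $[\alpha^{\phi}, \xi^{\phi}] = [\beta, \zeta]$.

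The main obstacle I anticipate is the reduction to a normal isomorphism $\gamma(E) = E$. Lemma \ref{goodmap} is stated only for normal isomorphisms, so one must verify that an arbitrary ring isomorphism can be adjusted to a normal one without disturbing the cohomology classes; this requires knowing that any two basic sets of idempotents in a square-free ring are conjugate by an inner automorphism and that such an adjustment is absorbed into the $(\mu,\eta)$ data rather than changing $[\alpha,\xi]$ or $[\beta,\zeta]$. Once that reduction is secured, the remaining work is the purely mechanical comparison of coefficients, which is the transpose of the calculation already carried out in Lemma \ref{rightiso}.
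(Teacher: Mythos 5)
Your proposal is correct and takes essentially the same approach as the paper: the reverse direction is quoted from Lemma \ref{rightiso}, and the forward direction proceeds exactly as in the paper by normalizing the cocycles, composing $\gamma$ with conjugation by a unit so that $\gamma(E)=E$ (the paper cites the fact that two basic sets of idempotents are conjugate), invoking Lemma \ref{goodmap}, and then comparing coefficients in the multiplicativity relation to conclude $(\mu,\eta)*(\alpha^{\phi},\xi^{\phi})=(\beta,\zeta)$. The only cosmetic difference is that you fold the two coefficient comparisons into one computation with general coefficients $d_1, d_2$, whereas the paper runs them as two separate calculations (one for the $\zeta$--$\xi$ identity, one for the $\beta$--$\alpha$ identity using the element $\mu_f^{-1}(d)$).
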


\begin{proof}
In Lemma \ref{rightiso}, we showed that $[\alpha^{\phi}, \xi^{\phi}]=  [\beta, \zeta] $ for some
$\phi \in \mbox{Aut}(S)$ gives $D^{\alpha}_{\xi} S \cong D^{\beta}_{\zeta} S$ as rings.

Suppose $D^{\alpha}_{\xi} S  \cong D^{\beta}_{\zeta} S$ via the ring isomorphism $\gamma$, and assume, without loss of generality, that $\xi$ and $\zeta$ are normal.  Since $\gamma$ is an isomorphism, we know that $E$ and $\gamma (E)$ are both basic sets for $ D^{\beta}_{\zeta} S$; therefore, there exists an invertible element $u \in D^{\beta}_{\zeta}S$ such that
	$$\rho_u \circ \gamma (E)=u \gamma(E) u^{-1}=E$$
(see \cite{frank}, Exercise 7.2 (2)).
%Using the Wedderburn-Malcev Theorem, see AF
Then $\rho_u \circ \gamma$ is a normal isomorphism from $D^{\alpha}_{\xi} S$ to $D^{\beta}_{\zeta}S$; thus we may assume without loss of generality that we begin with $\gamma$ a normal map.  By Lemma \ref{goodmap}, there exists $(\mu, \eta) \in   F^0(S, \mbox{Aut}(D)) \ltimes F^1 (S, D^*)$ and an automorphism $\phi \in \mbox{Aut}(S)$ such that
	$$\gamma(ds)=\mu_e( d) \eta(s) \phi(s)$$
for every $s=e \cdot s \cdot f \in S^*$. 
Then for $s, t \in S$ with $s \cdot t \neq \theta$ (and $\phi(s) \cdot \phi(t) \neq \theta)$, we have
\begin{align*}
\mu_e ( \zeta( s,t)) \eta(s \cdot t) \phi(s \cdot t) &=
\gamma( \zeta(s ,t ) s \cdot t) \\ \\
&= \gamma( s t)= \gamma(s) \gamma(t) \\ \\
&=\eta(s) \phi(s) \eta(t) \phi(t) \\ \\
&= \eta(s) \alpha^{\phi}_s (\eta(t)) \xi^{\phi} (s, t) \phi(s \cdot t)
\end{align*}
Equating coefficients in $D$ gives
\begin{equation} \label{result1}
\mu_e ( \zeta (s, t))=\eta(s) \alpha^{\phi}_s (\eta(t)) \xi^{\phi} (s, t) \eta( s \cdot t)^{-1}.
\end{equation} 

For $s=e \cdot s \cdot f \in S$ and $t=f \cdot t \in S$ such that $s \cdot t \neq \theta$, consider the arbitrary element of $D$ denoted by $\mu_f^{-1} ( d)$ .  Then
\begin{align*}
 \gamma( s (\mu_f^{-1}(d)f))=\gamma( \beta_s ( \mu_f^{-1}(d)) s) 
\Rightarrow & \gamma(s) \gamma(\mu_f^{-1}(d)f) =\mu_e ( \beta_s ( \mu_f^{-1} (d))) \eta(s) \phi(s) \\ \\
\Rightarrow & \eta(s) \phi(s) d =\mu_e ( \beta_s ( \mu_f^{-1} (d))) \eta(s) \phi(s) \\ \\
\Rightarrow & \eta(s) \alpha^{\phi}_s (d) =\mu_e ( \beta_s ( \mu_f^{-1} (d))) \eta(s)
\end{align*}
It follows then that
\begin{equation}\label{result2}
 \mu_{e} \circ \beta_{s} \circ \mu_{f }^{-1}= \rho_{\eta(s)} \circ \alpha^{\phi}_s
\end{equation}
By Equations \ref{result1} and \ref{result2}, we have
$$(\mu,\eta) * (\alpha^{\phi}, \xi^{\phi})=(\beta, \zeta).$$
Hence $[ \alpha^{\phi}, \xi^{\phi}]= [\beta, \zeta] \in \thin$, as desired.
\end{proof}

\pagebreak

\section{The Automorphism Group of a Square-Free Ring}

Let $R$ be a binomial ring.  By Lemma \ref{rightiso} and Theorem \ref{leftiso}, we may assume $R \cong D^{\alpha}_{\xi} S$, where $S$ is a square-free semigroup with idempotent set $E$, $D$ is the canonical division ring related to $R$ and $(\alpha, \xi) \in Z^2(S, D^*)$ is a normal 2-cocycle.  We will now consider the outer automorphism group
$$\mbox{Out} R  =\mbox{Aut }R/\mbox{Inn } R$$
of $R$, where $\mbox{Aut } R$ is the group of all ring automorphisms and $\mbox{Inn}(R)$ is the normal subgroup of $\mbox{Aut } R$ consisting of maps $\rho_r$ for invertible $r \in R$.  Finally, $1_D$ and $1_S$, respectively, will each denote the identity automorphism of the ring $D$ and the semigroup $S$.  An unlabeled $1$ will denote the multiplicative identity of $D$.

\begin{definition}
Let $(\alpha, \xi) \in Z^2(S, D^*)$.  A pair $(\mu, \eta) \in F^0(S, \mbox{Aut}(D)) \ltimes F^1(S, D^*)$ is called an \textit{$(\alpha, \xi)$ 1-cocyle} if
\begin{equation}
\mu_e \circ \alpha_s \circ \mu_f= \rho_{\eta(s)} \circ \alpha_s
\end{equation}
and
\begin{equation}
\mu_e( \xi(s, t))= \eta(s) \alpha_s( \eta(t)) \xi(s,t) \eta( s \cdot t)^{-1}
\end{equation}
for all $s=e \cdot s \cdot f \in S$ and $(s, t) \in S^{<2>}$.  The set of all $(\alpha, \xi)$ 1-cocyles will be denoted by $Z^1_{ (\alpha, \xi)} (S, D^*)$.
\end{definition}

\begin{remark}
Note that  $Z^1_{ (\alpha, \xi)} (S, D^*)$ is the stabilizer of $(\alpha, \xi)$ under the action $*$ of $ F^0(S, \mbox{Aut}(D)) \ltimes F^1(S, D^*)$ on $Z^2 (S, D^*)$.  Hence $Z^1_{ (\alpha, \xi)} (S, D^*)$ is a subgroup of $F^0(S, \mbox{Aut}(D)) \ltimes F^1(S, D^*)$.  It easily follows that $Z^1_{ (\beta, \zeta)} (S, D^*) \cong Z^1_{ (\alpha^{\phi}, \xi^{\phi})} (S, D^*)$ whenever $[\beta, \zeta]=[\alpha^{\phi}, \xi^{\phi}]$ in $\thin$.
\end{remark}

There is a group action $\star: F^0 (S, D^*) \times Z^1_{ (\alpha, \xi)} (S, D^*) \rightarrow Z^1_{ (\alpha, \xi)} (S, D^*)$ given by
\begin{align*}
& \epsilon \star (\mu, \eta)=(\hat{\mu}, \hat{\eta}) \\ \\
& (\hat{\mu}(s))(d)= (\rho_{\epsilon(e)} \circ \mu_s)(d) \hspace{1cm} \hat{\eta}(s)=\epsilon(e) \eta(s) \alpha_s ( \epsilon_f^{-1})
\end{align*}
for $d \in D$ and $s=e \cdot s \cdot f$.  

\begin{definition}
We call the orbit of $(1_D, 1)$ under this action the set of $(\alpha, \xi)$  \textit{1-coboundaries} and denote it by $B^1_{ (\alpha, \xi)} (S, D^*)$.  An easy check shows $B^1_{ (\alpha, \xi)} (S, D^*)$ is a normal subgroup of $Z^1_{ (\alpha, \xi)} (S, D^*)$.  We define the $(\alpha, \xi)$ 1-cohomology group of $S$ with coefficients in $D^*$ to be the factor group
$$H^1_{ (\alpha, \xi)} (S, D^*)=Z^1_{ (\alpha, \xi)} (S, D^*)/B^1_{ (\alpha, \xi)} (S, D^*).$$
\end{definition}

For $(\mu, \eta) \in Z^1_{(\alpha, \xi)}(S, D^*)$, let $\sigma_{\mu \eta}: R \rightarrow R$ be the map defined  by
$$\sigma_{\mu \eta} (ds)=\mu_e(d) \eta(s)s$$
for $d \in D$ and $s =e \cdot s \in S$.  By extending $\sigma_{\mu \eta}$ linearly and utilizing the defining properties of $Z^1_{(\alpha, \xi)}(S, D^*)$, one can verify that $\sigma_{\mu \eta}$ is a ring homomorphism.  Then we have the following:

\begin{lemma}
There is a homomorphism $\hat{\Lambda}:  Z^1_{(\alpha, \xi)} (S, D^*) \rightarrow \mbox{Aut } R$.
\end{lemma}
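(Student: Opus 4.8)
The plan is to define $\hat{\Lambda}(\mu, \eta) = \sigma_{\mu\eta}$, where $\sigma_{\mu\eta}\colon R \to R$ is the map introduced just above the statement, and then to check three things: that $\sigma_{(1_D,1)} = \id_R$; that $\sigma$ converts the group operation of $Z^1_{(\alpha, \xi)}(S, D^*)$ into composition of maps; and that, consequently, each $\sigma_{\mu\eta}$ is not merely a ring endomorphism but a ring automorphism, so that $\hat{\Lambda}$ genuinely takes values in $\mbox{Aut }R$. First I would recall from the preceding paragraph that $\sigma_{\mu\eta}$ is a ring homomorphism, so that $\hat{\Lambda}$ at least lands in the endomorphism monoid of $R$; the defining identities of $Z^1_{(\alpha, \xi)}(S, D^*)$ are exactly what is needed for that, and I would simply cite it.

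The heart of the argument is the identity
$$\sigma_{\mu\eta} \circ \sigma_{\mu'\eta'} = \sigma_{(\mu,\eta)\cdot(\mu',\eta')},$$
where the product on the right is taken in the semidirect product $F^0(S,\mbox{Aut}(D)) \ltimes F^1(S, D^*)$, restricted to the subgroup $Z^1_{(\alpha, \xi)}(S, D^*)$, which by the earlier Remark is closed under this operation, being the stabilizer of $(\alpha, \xi)$. Because $R$ is the left $D$-space on $S^*$ and both maps are additive, it suffices to evaluate each side on a single basis term $ds$ with $s = e \cdot s$. Applying $\sigma_{\mu'\eta'}$ first gives $\mu'_e(d)\eta'(s)\,s$, and then applying $\sigma_{\mu\eta}$, using that $\mu_e$ is a ring automorphism of $D$, yields $\mu_e(\mu'_e(d))\,\mu_e(\eta'(s))\,\eta(s)\,s$. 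Matching this against $\sigma_{(\mu'',\eta'')}(ds) = \mu''_e(d)\,\eta''(s)\,s$ forces $\mu'' = \mu \circ \mu'$ and $\eta''(s) = \mu_e(\eta'(s))\,\eta(s)$, which is precisely the semidirect-product multiplication rule. I would insert a short remark pinning down this product law so that the non-abelian order of the $D^*$-factor is unambiguous.

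With the multiplicativity identity in hand the remaining points are immediate. Evaluating $\sigma_{(1_D,1)}$ on $ds$ returns $ds$, so $\hat{\Lambda}$ carries the group identity of $Z^1_{(\alpha, \xi)}(S, D^*)$ to $\id_R$. Combining this with the identity above and the fact that $Z^1_{(\alpha, \xi)}(S, D^*)$ is a group, every $\sigma_{\mu\eta}$ has the two-sided inverse $\sigma_{(\mu,\eta)^{-1}}$ and is therefore a ring automorphism; hence $\hat{\Lambda}$ maps into $\mbox{Aut }R$ and is a group homomorphism.

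I expect the main obstacle to be purely bookkeeping rather than conceptual: keeping the order of multiplication in $D^*$ correct throughout, since the twisting by the automorphism $\mu_e$ and the placement of $\eta$ and $\eta'$ do not commute, and confirming that the semidirect-product operation on $F^0(S,\mbox{Aut}(D)) \ltimes F^1(S, D^*)$ used in the paper is the one making $\sigma$ covariant (a homomorphism) rather than contravariant. Once the product rule $\eta''(s) = \mu_e(\eta'(s))\,\eta(s)$ is matched against that convention, every other step is a direct verification on basis elements.
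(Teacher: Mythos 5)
Your proposal is correct and follows essentially the same route as the paper: the paper also defines $\hat{\Lambda}(\mu,\eta)=\sigma_{\mu\eta}$, cites the preceding paragraph for $\sigma_{\mu\eta}$ being a ring homomorphism, and proves the lemma by exactly your basis computation $\sigma_{\mu\eta}\circ\sigma_{\hat{\mu}\hat{\eta}}(ds)=(\mu_e\circ\hat{\mu}_e)(d)\,\mu_e(\hat{\eta}(s))\,\eta(s)\,s=\sigma_{(\mu,\eta)(\hat{\mu},\hat{\eta})}(ds)$, with the same semidirect-product convention you pin down. Your additional explicit checks that $\sigma_{(1_D,1)}=\mathrm{id}_R$ and that invertibility of $(\mu,\eta)$ in the group $Z^1_{(\alpha,\xi)}(S,D^*)$ forces each $\sigma_{\mu\eta}$ to be an automorphism are left implicit in the paper, so your write-up is if anything slightly more complete.
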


\begin{proof}
For $( \mu, \eta) \in  Z^1_{(\alpha, \xi)} (S, D^*)$ and $ds \in R \cong D^{\alpha}_{\xi} S$, we easily see
\begin{align*}
\hat{\Lambda}( (\mu, \eta)) \circ \Lambda( ( \hat{\mu}, \hat{\eta}))(ds) &=
\hat{\Lambda}( (\mu, \eta))( \hat{\mu}(d) \hat{\eta}(s)(s)) \\ \\
&=\mu( \hat{\mu}(d) \hat{\eta}(s) ) \eta(s) s \\ \\
&=(\mu \circ \hat{\mu})(d)  \mu(\hat{\eta}(s) ) \eta(s) s =\hat{\Lambda}( (\mu, \eta) (\hat{\mu}, \hat{\eta}))(ds)
\end{align*}
\end{proof}

\begin{lemma}
There exists a monomorphism 
$$ \Lambda: H^1_{(\alpha, \xi)} (S, D^*) \rightarrow \mbox{Out } R$$
defined by
$$ \Lambda:( B^1( S, D^*)) ( \mu, \eta) \rightarrow (\mbox{Inn } R )\sigma_{\mu \eta}$$
for every $( \mu, \eta) \in Z^1_{(\alpha, \xi)} (S, D^*)$.
\end{lemma}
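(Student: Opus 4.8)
The plan is to realize $\Lambda$ as the map induced on cohomology by the homomorphism of the previous lemma. Let $\pi \colon \mbox{Aut } R \to \mbox{Out } R$ be the canonical projection and set $\Theta = \pi \circ \hat{\Lambda}$, so that $\Theta \colon Z^1_{(\alpha, \xi)}(S, D^*) \to \mbox{Out } R$ is the homomorphism $(\mu, \eta) \mapsto (\mbox{Inn } R)\sigma_{\mu \eta}$. To produce $\Lambda$ and prove it is a monomorphism it then suffices, by the first isomorphism theorem, to show that the kernel of $\Theta$ is exactly $B^1_{(\alpha, \xi)}(S, D^*)$: the containment $B^1_{(\alpha, \xi)}(S, D^*) \subseteq \ker \Theta$ gives that $\Lambda$ is well defined on the quotient, and the reverse containment gives injectivity.

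For the containment $B^1_{(\alpha, \xi)}(S, D^*) \subseteq \ker \Theta$, I would take a coboundary $(\mu, \eta) = \epsilon \star (1_D, 1)$ with $\epsilon \in F^0(S, D^*)$, so that $\mu_e = \rho_{\epsilon(e)}$ and $\eta(s) = \epsilon(e)\, \alpha_s(\epsilon(f)^{-1})$ for $s = e \cdot s \cdot f$, and exhibit an explicit invertible $u \in R$ with $\sigma_{\mu \eta} = \rho_u$. The natural candidate is $u = \sum_{e \in E} \epsilon(e)\, e$, whose inverse is $\sum_{e \in E} \epsilon(e)^{-1} e$; here one uses that $(\alpha, \xi)$ is normal, so that $\alpha_e = 1_D$ and $\xi(e, e) = 1$, together with the orthogonality of the idempotents, to check $u u^{-1} = u^{-1} u = 1$. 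A direct computation with the twisted multiplication, again invoking normality in the form $\xi(e, s) = \xi(s, f) = 1$, gives $u(ds) = \epsilon(e)\, d\, s$ and hence $\rho_u(ds) = \epsilon(e)\, d\, \alpha_s(\epsilon(f)^{-1})\, s$ for $s = e \cdot s \cdot f$. Since $\sigma_{\mu \eta}(ds) = \rho_{\epsilon(e)}(d)\, \eta(s)\, s = \epsilon(e)\, d\, \alpha_s(\epsilon(f)^{-1})\, s$, the two maps agree and $\sigma_{\mu \eta} = \rho_u \in \mbox{Inn } R$.

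For the reverse containment, suppose $(\mu, \eta) \in \ker \Theta$, that is, $\sigma_{\mu \eta} = \rho_u$ for some invertible $u \in R$. First I note $\eta(e) = 1$ for each $e \in E$: taking $s = t = e$ in the $(\alpha, \xi)$ $1$-cocycle identity and using normality yields $1 = \mu_e(\xi(e,e)) = \eta(e)$. Consequently $\sigma_{\mu \eta}(e) = \mu_e(1)\eta(e)\, e = e$, so $\rho_u$ fixes every idempotent and $u$ commutes with all of $E$. Writing $u$ in its Peirce decomposition relative to $E$, the relations $ue = eu$ for all $e \in E$ force every off-diagonal component to vanish, so $u = \sum_{e \in E} \epsilon(e)\, e$ with each $\epsilon(e) \in e R e \cong D$; invertibility of $u$ forces $\epsilon(e) \in D^*$, giving $\epsilon \in F^0(S, D^*)$. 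Repeating the computation of the previous step and equating $\rho_u(ds) = \sigma_{\mu \eta}(ds)$ coefficientwise (first at $d = 1$ to read off $\eta$, then for general $d$ to read off $\mu_e$) recovers $\eta(s) = \epsilon(e)\, \alpha_s(\epsilon(f)^{-1})$ and $\mu_e = \rho_{\epsilon(e)}$, that is, $(\mu, \eta) = \epsilon \star (1_D, 1) \in B^1_{(\alpha, \xi)}(S, D^*)$.

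The routine parts are the bookkeeping with the twisted multiplication in $D^\alpha_\xi S$; the step I expect to be the crux is the reverse containment, specifically the passage from ``$u$ commutes with every idempotent'' to the diagonal form $u = \sum_{e \in E} \epsilon(e)\, e$, which is where the square-free structure (the Peirce decomposition together with $e R e \cong D$) does the real work. Once $u$ is known to be diagonal, the matching of coefficients is forced and the identification of $(\mu,\eta)$ with an $(\alpha,\xi)$ $1$-coboundary is immediate.
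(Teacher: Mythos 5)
Your proposal is correct and follows essentially the same route as the paper: both reduce the lemma to showing that $\sigma_{\mu\eta}\in\mbox{Inn }R$ if and only if $(\mu,\eta)\in B^1_{(\alpha,\xi)}(S,D^*)$, prove the forward direction with the explicit unit $u=\sum_{e\in E}\epsilon(e)e$, and prove the converse by recovering $\epsilon$ from the diagonal components of the conjugating element. The only difference is cosmetic but in your favor: where the paper expands $r=\sum_{t\in S}d_t t$ and extracts the coefficient of $s$ directly, you first use $\eta(e)=1$ and commutation with the idempotents to force $u$ into diagonal Peirce form, which makes explicit a step the paper leaves implicit.
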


\begin{proof}
It is enough to show for  $(\mu, \eta) \in Z^1_{(\alpha, \xi)} (S, D^*)$ that $(\mu, \eta) \in B^1_{(\alpha, \xi)} (S, D^*)$ if and only if $\sigma_{\mu \eta} \in \mbox{Inn } R$.  First suppose $(\mu, \eta) \in B^1_{(\alpha, \xi)}(S, D^*) $.  Then there exists $\epsilon \in F^0 (S, D^*)$ such that 
\begin{align*}
& \mu_e (d)=\rho_{\epsilon(e)} (d) \\ \\
& \eta(s)= \epsilon(e) \alpha_s (\epsilon(f)^{-1})
\end{align*}
for $d \in D$ and $s=e \cdot s \cdot f \in S$.   Let $r=\sum_{e \in E} \epsilon(e)e \in R$.  Then $r^{-1}=\sum_{f \in E} \epsilon(f)^{-1}f \in R$ and
\begin{align*}
\sigma_{\mu \eta} (d s)=\mu_e(d) \eta(s) s& =\rho_{\epsilon(e)} (d)   \epsilon(e) \eta(s) \alpha_s (\epsilon(f)^{-1}) s \\ \\
\mu_e(d) \eta(s) s& =\rho_{\epsilon(e)} (d)   \epsilon(e) \eta(s) s \epsilon(f)^{-1}  \\ \\
&= \epsilon(e)  ds \epsilon_f^{-1} \\ \\
&=  \left( \sum_{e \in E} \epsilon(e)e \right) ds   \left( \sum_{f \in E} \epsilon^{-1} (f)f \right) \\ \\
&=\rho_{r} (ds)
\end{align*}
and it follows that $\sigma_{\mu \eta} \in \mbox{Inn } R$.

Now suppose $\sigma_{\mu \eta} \in \mbox{Inn } R$.  Then for all $d \in D^*$ and $s=e \cdot s \cdot f \in S$, there exists $r \in R$ such that
\begin{align*}
\sigma_{\mu \eta}(ds)= \rho_r (ds) & \Rightarrow \mu(d) \eta(s) s= r (ds) r^{-1}.
\end{align*}
Since $r \in R$, we have $r=\sum_{t \in S} d_t t$ and $r^{-1}= \sum_{t \in S} \hat{d}_t t$.  But
$$ \mu(d) \eta(s) s= r (ds) r^{-1}$$
implies that 
\begin{align*}
 \eta(s)s=r (s) r^{-1}&=\left(\sum_{t \in S} d_t t \right)s \left( \sum_{s \in S} \hat{d}_t t \right) \\ \\
 &=d_e  s \hat{d}_f  = d_e \alpha_s (\hat{d}_f) s.
\end{align*}
Since $\sigma_{\mu \eta} \in \mbox{Aut }R$ and $\sigma_{\mu \eta} (e)=\sigma_{\mu \eta}(e^2)$ implies
$$\eta(e)=\eta(e) \alpha_e( \eta(e))=\eta(e) \eta(e) \Rightarrow \eta(e)=1,$$ 
setting $s=e$ from above gives
$$1=d_e \hat{d}_e \Rightarrow \hat{d}_e=d_e^{-1}$$
for all $e \in E$.
Define $\epsilon \in F^0( S, D^*)$ by
$$ \epsilon(e)=d_e$$
for $e \in E$.  By the work above, 
\begin{equation}\label{inner and boundary1}
\eta(s)= \epsilon(e) \alpha_s (\epsilon(f)^{-1})
\end{equation}
 and
\begin{align*}
\mu(d) \eta(s) s= r (ds) r^{-1}\Rightarrow \mu(d) d_e \alpha_s (d_f^{-1})s= d_e d \alpha_s (d_f^{-1}) s
\end{align*}
from which we have
\begin{equation}\label{inner and boundary2}
\mu(d)=d_e d d_e^{-1}= \rho_{\epsilon(e)} (d).
\end{equation}
Equations \ref{inner and boundary1} and \ref{inner and boundary2} show that $( \mu, \eta) \in B^1 (S, D^*)$.
\end{proof}

For the idempotent set $E \subset S$, define $\mbox{Aut}_0 R$ by 
$$\mbox{Aut}_0 R=\{ \gamma_0 |~ \gamma_0(E)=E\}.$$
For any $\gamma \in \mbox{Aut}(R)$, it follows (by Lemma 2.4 of \cite{And-D'Amb}) there exists a normal automorphism $\gamma_0$ of $R$ with $(\mbox{Inn }R) \gamma=(\mbox{Inn }R)\gamma_0$.  If in addition, $\rho_r \in \mbox{Inn }R \cap \mbox{Aut}_0 R$, we must have $\rho_r (E)=E$.  If $R$ is basic, since $R e \cong R \rho_r (e)$ for all $e \in E$, we must have $\rho_r(e)=e$ for all $e \in E$.  It follows if $(\mbox{Inn }R) \gamma_0=(\mbox{Inn }R) \gamma'_0$ with $\gamma_0, ~\gamma'_0 \in \mbox{Aut}_0 R$ then $\gamma_0(e)=\gamma'_0(e)$ for all $e \in E$.  By Lemma \ref{goodmap}, we have
$$ \gamma_0(e)=\phi(e)=\gamma'_0(e)$$
for some $\phi \in \mbox{Aut }E$.  Since $S$ is square-free, $\phi$ is also an automorphism of $S$ and we have a well-defined map
$$\Phi: \mbox{Out } R \rightarrow \mbox{Aut }S$$
given by
$$ (\mbox{Inn }R) \gamma_0 \rightarrow \phi,$$
where $\gamma_0 \in \mbox{Aut}_0R$ such that $\gamma_0(ds)=\mu_e(d) \eta(s) \phi(s)$ for $d \in D$ and $s=e \cdot s \in S$.  A straightforward check shows that $\Phi$ is a group homomorphism.

\begin{lemma}
$\mbox{Im } \Lambda = \mbox{Ker } \Phi$
\end{lemma}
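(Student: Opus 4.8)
The plan is to prove the two inclusions $\mbox{Im } \Lambda \subseteq \mbox{Ker } \Phi$ and $\mbox{Ker } \Phi \subseteq \mbox{Im } \Lambda$ separately. In both directions the key device is the explicit normal form $\gamma_0(ds)=\mu_e(d)\eta(s)\phi(s)$ of Lemma \ref{goodmap}, read against the construction $\sigma_{\mu\eta}(ds)=\mu_e(d)\eta(s)s$, together with the coefficient-matching computation already carried out in the proof of Theorem \ref{leftiso}.

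First I would establish $\mbox{Im } \Lambda \subseteq \mbox{Ker } \Phi$. An element of $\mbox{Im } \Lambda$ is a coset $(\mbox{Inn } R)\sigma_{\mu\eta}$ with $(\mu,\eta)\in Z^1_{(\alpha,\xi)}(S,D^*)$. Since $(\alpha,\xi)$ is normal, $\sigma_{\mu\eta}(e)=\mu_e(1)\eta(e)e=\eta(e)e$, and applying $\sigma_{\mu\eta}(e)=\sigma_{\mu\eta}(e^2)$ forces $\eta(e)=1$ exactly as in the proof of the monomorphism lemma; hence $\sigma_{\mu\eta}$ fixes every $e\in E$ and lies in $\mbox{Aut}_0 R$, so it may be used as the normal representative computing $\Phi$. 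Reading off its defining formula $\sigma_{\mu\eta}(ds)=\mu_e(d)\eta(s)s$, the associated semigroup automorphism is $\phi=1_S$, whence $\Phi\big((\mbox{Inn } R)\sigma_{\mu\eta}\big)=1_S$ and the inclusion follows.

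For the reverse inclusion I would take a class $(\mbox{Inn } R)\gamma_0\in\mbox{Ker }\Phi$ with $\gamma_0\in\mbox{Aut}_0 R$, so its image $\phi$ is the identity of $S$. By Lemma \ref{goodmap}, $\gamma_0(ds)=\mu_e(d)\eta(s)\phi(s)=\mu_e(d)\eta(s)s$ for a suitable $(\mu,\eta)\in F^0(S,\mbox{Aut}(D))\ltimes F^1(S,D^*)$, i.e. $\gamma_0=\sigma_{\mu\eta}$. It remains to verify $(\mu,\eta)\in Z^1_{(\alpha,\xi)}(S,D^*)$. This is precisely the specialization of the argument in Theorem \ref{leftiso} to $\beta=\alpha$, $\zeta=\xi$, $\phi=1_S$: equating the $D$-coefficients in $\gamma_0(st)=\gamma_0(s)\gamma_0(t)$ and in $\gamma_0\big(s(\mu_f^{-1}(d)f)\big)=\gamma_0(\alpha_s(\mu_f^{-1}(d))s)$ yields the two stabilizer identities $\mu_e\circ\alpha_s\circ\mu_f^{-1}=\rho_{\eta(s)}\circ\alpha_s$ and $\mu_e(\xi(s,t))=\eta(s)\alpha_s(\eta(t))\xi(s,t)\eta(s\cdot t)^{-1}$, which are exactly the defining relations of $Z^1_{(\alpha,\xi)}(S,D^*)$ (equivalently, the statement that $(\mu,\eta)$ stabilizes $(\alpha,\xi)$ under $*$). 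Then $\Lambda$ carries the class of $(\mu,\eta)$ to $(\mbox{Inn } R)\sigma_{\mu\eta}=(\mbox{Inn } R)\gamma_0$, so the coset lies in $\mbox{Im }\Lambda$.

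The main obstacle is bookkeeping rather than conceptual: one must confirm that the pair $(\mu,\eta)$ extracted from $\gamma_0$ lands in the stabilizer subgroup $Z^1_{(\alpha,\xi)}(S,D^*)$ and not merely in the ambient group $F^0(S,\mbox{Aut}(D))\ltimes F^1(S,D^*)$. This is forced because $\gamma_0$ is an automorphism of the \emph{same} twisted ring $D^{\alpha}_{\xi}S$ and $\phi=1_S$ gives $\alpha^{\phi}=\alpha$, $\xi^{\phi}=\xi$, so the identities of Theorem \ref{leftiso} collapse onto the stabilizer equations. A minor preliminary point is that the whole argument is well defined on cosets: by the discussion preceding the lemma, distinct normal representatives of a class in $\mbox{Out } R$ induce the same $\phi$, so membership in $\mbox{Ker }\Phi$ depends only on the coset.
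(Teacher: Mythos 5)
Your proposal is correct and takes essentially the same route as the paper: the forward inclusion reads $\phi = 1_S$ directly off the defining formula of $\sigma_{\mu\eta}$, and the reverse inclusion uses Lemma \ref{goodmap} with $\phi = 1_S$ to write $\gamma_0 = \sigma_{\mu\eta}$ and then derives the two stabilizer identities by equating $D$-coefficients in $\gamma_0(st)=\gamma_0(s)\gamma_0(t)$ and $\gamma_0(s(dt))=\gamma_0(s)\gamma_0(dt)$, which is exactly the paper's argument (your citation of the Theorem \ref{leftiso} computation specialized to $\beta=\alpha$, $\zeta=\xi$, $\phi=1_S$ is the same calculation). Your additional checks --- that $\eta(e)=1$ so $\sigma_{\mu\eta}$ fixes $E$ and is a legitimate normal representative, and that membership in $\mbox{Ker }\Phi$ is independent of the chosen representative --- are details the paper leaves implicit or handles in the discussion preceding the lemma, and they are consistent with it.
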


\begin{proof}
Let $( B^1( S, D^*)) ( \mu, \eta)  \in H^1_{(\alpha, \xi)} (S, D^*)$.  Then
$$ \Phi \Lambda (( B^1( S, D^*)) ( \mu, \eta))=\Phi( (\mbox{Inn}(R))\sigma_{\mu \eta})=1_S.$$
It follows then that $\mbox{Im } \Lambda \subseteq \mbox{Ker } \Phi$.  

Now let $\gamma \in\mbox{Aut}_0R$ with $(\mbox{Inn } R)\gamma \in \mbox{Ker } \Phi$.  Then by Lemma \ref{goodmap}, for each $s \in S$;
$$ \gamma(s)= \eta(s)s.$$
Because $\gamma$ is a homomorphism, $\gamma( s t)=\gamma(s) \gamma(t)$ and it follows that 
\begin{equation}\label{gammahomom}
\mu_e( \xi (s, t))\eta( s \cdot t)= \eta(s) \alpha_s (\eta(t)) \xi(s ,t).
\end{equation}
Also, $\gamma(s (dt))=\gamma(s) \gamma(dt)$ and from Equation \ref{gammahomom} we have
\begin{equation}
\mu_e \circ \alpha_s \circ \mu_f^{-1}=\rho_{\eta(s)} \circ \alpha_s.
\end{equation}
From the above equations we see $( \mu,  \eta)$ is a 1-cocycle and we have
$$ \mbox{(Inn $R$)} \gamma= \mbox{(Inn $R$)}\sigma_{\mu \eta}=\Lambda((B^1(S, D^*) (\mu, \eta)) \in \mbox{Im } \Lambda.$$
\end{proof}

We now focus on finding the image of $\Lambda$ in $\mbox{Aut }S$.   Recall that $\mbox{Aut }S$ acts on $\thin$.  For any element $[\alpha, \xi] \in \thin$, the stabilizer of the element is the set
$$\mbox{Stab}_{ [\alpha, \xi]}(\mbox{Aut }S)= \{ \phi \in \mbox{Aut}(S)| ~~[\alpha, \xi]= [\alpha^{\phi}, \xi^{\phi}]\}.$$
This means there exists an element $(\mu, \eta) \in F^0(S, \mbox{Aut}(D)) \ltimes F^1(S, D^*)$ such that

\begin{equation}
\mu_e \circ \alpha_s \circ \mu_f^{-1}= \rho_{\eta(s)} \circ \alpha_s^{\phi}
\end{equation}
and
\begin{equation}
\mu_e( \xi(s, t))= \eta(s) \alpha_s( \eta(t)) \xi^{\phi}(s,t) \eta( s \cdot t)^{-1}
\end{equation}
for all $(s, t) \in S^{<2>}$.

\begin{lemma}
$\mbox{Im }\Phi=\mbox{Stab}_{ [\alpha, \xi]}(\mbox{Aut }S)$.
\end{lemma}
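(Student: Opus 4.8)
The plan is to prove the two inclusions separately, in each case reducing to one of the isomorphism results already in hand. For $\mbox{Im }\Phi \subseteq \mbox{Stab}_{[\alpha,\xi]}(\mbox{Aut }S)$ I would start with an arbitrary $\phi \in \mbox{Im }\Phi$ and choose a normal automorphism $\gamma_0 \in \mbox{Aut}_0 R$ with $\Phi((\mbox{Inn }R)\gamma_0) = \phi$. Lemma \ref{goodmap} supplies a pair $(\mu,\eta) \in F^0(S,\mbox{Aut}(D)) \ltimes F^1(S,D^*)$ with $\gamma_0(ds) = \mu_e(d)\eta(s)\phi(s)$. Since $\gamma_0$ is an automorphism of $R \cong D^\alpha_\xi S$, this is exactly the situation treated in the ``only if'' half of Theorem \ref{leftiso} with $\beta = \alpha$ and $\zeta = \xi$; running that computation verbatim gives $(\mu,\eta) * (\alpha^\phi,\xi^\phi) = (\alpha,\xi)$, hence $[\alpha^\phi,\xi^\phi] = [\alpha,\xi]$ in $\thin$, and therefore $\phi \in \mbox{Stab}_{[\alpha,\xi]}(\mbox{Aut }S)$.

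For the reverse inclusion I would take $\phi \in \mbox{Stab}_{[\alpha,\xi]}(\mbox{Aut }S)$, so that $[\alpha^\phi,\xi^\phi] = [\alpha,\xi]$ and there is a $(\mu,\eta)$ realizing the two stabilizer equations displayed just above the statement. This is precisely the hypothesis of Lemma \ref{rightiso} taken with $\beta = \alpha$ and $\zeta = \xi$, so the same formula $\gamma_0(ds) = \mu_e(d)\eta(s)\phi(s)$ defines a ring isomorphism $\gamma_0 : R \to R$, that is, an automorphism. It then remains to check that $\gamma_0 \in \mbox{Aut}_0 R$ and that $\Phi$ sends its class to $\phi$. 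For $e \in E$ one computes $\gamma_0(e) = \mu_e(1)\eta(e)\phi(e)$; since $\mu_e \in \mbox{Aut}(D)$ we have $\mu_e(1) = 1$, and because $\gamma_0(e)$ is necessarily idempotent, the identity $\eta(e) = \eta(e)\alpha_e(\eta(e)) = \eta(e)^2$ (using the normalization $\alpha_e = 1_D$) forces $\eta(e) = 1$. Hence $\gamma_0(e) = \phi(e) \in E$, and as $\phi$ permutes $E$ we obtain $\gamma_0(E) = E$, so $\gamma_0 \in \mbox{Aut}_0 R$. By the definition of $\Phi$ we then have $\Phi((\mbox{Inn }R)\gamma_0) = \phi$, whence $\phi \in \mbox{Im }\Phi$.

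The substantive content of both directions has already been isolated in Lemmas \ref{rightiso} and \ref{goodmap} and in Theorem \ref{leftiso}, so the remaining work is mostly bookkeeping. The main point requiring care is tracking the action conventions: one must feed the pair $(\mu,\eta)$ extracted from the stabilizer condition into Lemma \ref{rightiso} with the correct placement of $\alpha$ versus $\alpha^\phi$ and $\xi$ versus $\xi^\phi$, and then confirm that the resulting automorphism is genuinely normal rather than merely abstract. That normalization step — forcing $\eta(e) = 1$ and hence $\gamma_0(E) = E$ — is what ties the constructed automorphism back to the specific $\phi$ whose stabilizing property was assumed, and it is the step I expect to be the main obstacle.
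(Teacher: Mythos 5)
Your proof is correct and follows essentially the same route as the paper's: both inclusions are handled exactly as in the paper, with Lemma \ref{goodmap} extracting $(\mu,\eta)$ and $\phi$ for the direction $\mbox{Im }\Phi \subseteq \mbox{Stab}_{[\alpha,\xi]}(\mbox{Aut }S)$, and the map $ds \mapsto \mu_e(d)\eta(s)\phi(s)$ providing the normal automorphism for the reverse inclusion. The only difference is presentational: where you cite the multiplicativity computations already carried out in Lemma \ref{rightiso} and Theorem \ref{leftiso} (specialized to $\beta=\alpha$, $\zeta=\xi$), the paper re-derives them inline for the map $\sigma_{\mu \eta \phi}$, and your normalization step forcing $\eta(e)=1$ is the same observation the paper records immediately after Lemma \ref{goodmap}.
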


\begin{proof}
First, suppose $\phi \in \mbox{Aut }S$ and $(\mu, \eta) \in F^0(S, \mbox{Aut}(D)) \ltimes F^1(S, D^*)$ with
\begin{equation}
\mu_e \circ \alpha_s \circ \mu_f^{-1}= \rho_{\eta(s)} \circ \alpha_s^{\phi}
\end{equation}
and
\begin{equation}
\mu_e( \xi(s, t))= \eta(s) \alpha_s( \eta(t)) \xi^{\phi}(s,t) \eta( s \cdot t)^{-1}
\end{equation}
for all $(s, t) \in S^{<2>}$.  Define a map $\sigma_{\mu \eta \phi}: R \rightarrow R$ by
$$\sigma_{\mu \eta \phi}(ds)=\mu_e(d) \eta(s) \phi(s)$$
for $d \in S$ and $s=e \cdot s \in S$.  Clearly, $\sigma_{\mu \eta \phi}$ preserves addition.  We must show that is also preserves multiplication.  Let $d_1, d_2 \in D$ and $s=e \cdot s \cdot f,~~t=f \cdot t  \in S$ with $s \cdot t \neq \theta$.  So
\begin{align*}
\sigma_{\mu \eta \phi} (d_1 s)\sigma_{\mu \eta \phi} ( d_2 t)&=
\mu_e(d_1) \eta(s) \phi(s) \mu_f(d_2) \eta(t) \phi(t)\\ \\
& =\mu_e(d_1) \eta(s) \alpha^{\phi}_s (\mu_f(d_2) ) \alpha^{\phi}_s(  \eta(t) ) \phi(s) \phi(t)\\ \\
&=\mu_e(d_1) \rho_{\eta(s)} ( \alpha^{\phi}_s (\mu_f(d_2) ) ) \eta(s) \alpha^{\phi}_s(  \eta(t) ) \xi^{\phi}(s,t)  \phi(s \cdot t)\\ \\
&=\mu_e(d_1) [\rho_{\eta(s)} ( \alpha^{\phi}_s (\mu_f(d_2) ) )] \eta(s) \alpha^{\phi}_s(  \eta(t) ) \xi^{\phi}(s,t)  \eta(s \cdot t)^{-1} \eta(s \cdot t) \phi(s \cdot t)\\ \\
&=\mu_e(d_1) [\mu_e(  \alpha_s (d_2))] \mu_e (\xi (s, t)) \eta(s \cdot t) \phi(s \cdot t) \\ \\
&=\sigma_{\mu \eta \phi} (d_1 \alpha_s (d_2) \xi( s, t) s \cdot t)=\sigma_{\mu \eta \phi} (d_1 s d_2 t)
\end{align*}
and it follows that $\sigma_{\mu \eta \phi} \in \mbox{Aut}_0R$.
\vspace{.2cm}
\\ Conversely, now suppose that $\gamma \in \mbox{Aut}_0 R$ with $\phi=\Phi (\mbox{Inn}(R)) \gamma$.  Then by Lemma \ref{goodmap}, $\gamma=\sigma_{\mu \eta \phi}$ for some $\mu \in F^0(S, \mbox{Aut}(D))$ and $\eta \in F^1 (S, D^*)$.  Since $\gamma=\sigma_{\mu \eta \phi}$ is a homomorphism, $\sigma_{\mu \eta \phi}(s t)=\sigma_{\mu \eta \phi}(s)\sigma_{\mu \eta \phi}(t)$ for all $s=e\cdot s , t=f \cdot t\in S$ and
\begin{align*}
\sigma_{\mu \eta \phi}(s t)=\sigma_{\mu \eta \phi}(s)\sigma_{\mu \eta \phi}(t) & \Rightarrow
\sigma_{\mu \eta \phi}(\xi(s, t) s \cdot t)=\eta(s) \phi(s) \eta(t) \phi(t) \\ \\
& \Rightarrow \mu_e( \xi(s, t)) \eta(s \cdot t) \phi(s \cdot t)=\eta(s) \alpha^{\phi}(\eta(t) ) \xi^{\phi} (s, t) \phi(s \cdot t)
\end{align*}
Equating coefficients of $\phi(s \cdot t)$, it follows
\begin{equation}\label{right above}
 \mu_e( \xi(s, t))=\eta(s) \alpha^{\phi}_s(\eta(t) ) \xi^{\phi} (s, t)  \eta(s \cdot t)^{-1}
\end{equation}
Furthermore,
\begin{align*}
& \sigma_{\mu \eta \phi}(s \mu_f^{-1}(d)t)=\sigma_{\mu \eta \phi}(s)\sigma_{\mu \eta \phi}(\mu_f^{-1}(d)t)  \Rightarrow
\sigma_{\mu \eta \phi}(\alpha_s (\mu_f^{-1}(d) \xi(s, t) s \cdot t)=\eta(s) \phi(s) d \eta(t) \phi(t) \\ \\
& \Rightarrow
\mu_e(\alpha_s (\mu_f^{-1}(d)))[ \mu_e( \xi(s, t)) \eta(s \cdot t) ]  \phi(s \cdot t)=\eta(s) \alpha^{\phi}_s( d) \alpha^{\phi}_s( \eta(t)) \xi^{\phi}(s, t) \phi(s \cdot t)
\end{align*}
Equating coefficients and using Equation \ref{right above} above, it follows
\begin{align*}
& \mu_e(\alpha_s (\mu_f^{-1}(d)) [\eta(s) \alpha^{\phi}_s(\eta(t) ) \xi^{\phi} (s, t)]=\eta(s) \alpha^{\phi}_s( d) \alpha^{\phi}_s( \eta(t)) \xi^{\phi}(s, t)  \\ \\
& \Rightarrow \mu_e(\alpha_s (\mu_f^{-1}(d))) =\eta(s) \alpha^{\phi}_s( d) \eta(s)^{-1} \\ \\
& \Rightarrow  \mu_e \circ \alpha_s \circ \mu_f^{-1}=\rho_{\eta(s)} \circ \alpha^{\phi}_s.
\end{align*}
So $\phi \in \mbox{Stab}_{[\alpha, \xi]} (\mbox{Aut}(S))$ and $\mbox{Im } \Phi=\mbox{Stab}_{[\alpha, \xi]} (\mbox{Aut}(S))$.
\end{proof}

Finally, we have the main theorem of this section.

\begin{theorem}\label{SES}
Let $R$ be a square-free ring with associated semigroup $S$, division ring $D$ and normal $(\xi, \alpha) \in Z^2(S, D*)$ such that $R \cong D^{\alpha}_{\xi} S$.  The following sequence is exact:

\begin{align*}
1 \longrightarrow H^1_{(\alpha, \xi)} (S, D^*) \overset{\Lambda}{\longrightarrow } \mbox{Out }R
\overset{\Phi}{\longrightarrow } \mbox{Stab}_{(\alpha, \xi)} (\mbox{Aut } S) \longrightarrow 1.
\end{align*}
If $[\alpha, \xi]=[1_D, 1]$, then $\mbox{Stab}_{(\alpha, \xi)}(\mbox{Aut }S)=\mbox{Aut }S$ and the following sequence is exact:
\begin{align*}
1 \longrightarrow H^1_{(\alpha, \xi)} (S, D^*) \overset{\Lambda}{\longrightarrow } \mbox{Out } R
\overset{\Phi}{\longrightarrow } \mbox{Aut } S \longrightarrow 1.
\end{align*}
Moreover, this last sequence splits, and so in this case $\mbox{Out } R$ is isomorphic to 
$$H^1_{(\alpha, \xi)}(S, D^*) \ltimes \mbox{Aut }S.$$
\end{theorem}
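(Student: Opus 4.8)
The plan is to assemble the first exact sequence directly from the four lemmas immediately preceding the theorem, then obtain the second sequence as a specialization, and finally produce an explicit splitting.

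First I would establish exactness of
\begin{align*}
1 \longrightarrow H^1_{(\alpha, \xi)} (S, D^*) \overset{\Lambda}{\longrightarrow } \mbox{Out }R \overset{\Phi}{\longrightarrow } \mbox{Stab}_{(\alpha, \xi)} (\mbox{Aut } S) \longrightarrow 1
\end{align*}
term by term, with no new computation. Exactness at $H^1_{(\alpha,\xi)}(S,D^*)$ is precisely the statement that $\Lambda$ is a monomorphism, already proved. Exactness at $\mbox{Out }R$ is the identity $\mbox{Im }\Lambda = \mbox{Ker }\Phi$, also established above. Exactness at $\mbox{Stab}_{(\alpha, \xi)} (\mbox{Aut } S)$, i.e. surjectivity of $\Phi$ onto the stabilizer, is exactly the lemma $\mbox{Im }\Phi = \mbox{Stab}_{[\alpha,\xi]}(\mbox{Aut }S)$. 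So this part is purely a matter of citing the assembled pieces.

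Next I would dispose of the case $[\alpha,\xi]=[1_D,1]$. For any $\phi \in \mbox{Aut }S$ one computes $\alpha^{\phi}_s = 1_D(\phi(s)) = 1_D$ and $\xi^{\phi}(s,t)=1(\phi(s),\phi(t))=1$, so $(\alpha^{\phi},\xi^{\phi})=(1_D,1)$ literally; hence $[\alpha^{\phi},\xi^{\phi}]=[1_D,1]=[\alpha,\xi]$ and every $\phi$ stabilizes the class. Therefore $\mbox{Stab}_{(\alpha,\xi)}(\mbox{Aut }S)=\mbox{Aut }S$, and the second sequence is just the first with the stabilizer replaced by all of $\mbox{Aut }S$.

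For the splitting, I would first invoke Theorem \ref{leftiso} (through Lemma \ref{rightiso}) to reduce to the untwisted model $R \cong D^{1_D}_{1} S$, since isomorphic square-free rings have isomorphic outer automorphism groups and the associated cohomology data is transported by the isomorphism. In this model $\alpha_s=1_D$ and $\xi \equiv 1$, so the relations $sd=\alpha_s(d)s$ and $st=\xi(s,t)s\cdot t$ collapse to $sd=ds$ and $st=s\cdot t$. For each $\phi \in \mbox{Aut }S$ I would then define $\tilde\phi: R \to R$ by $\tilde\phi(ds)=d\phi(s)$, extended linearly, and check multiplicativity directly from these collapsed relations together with the fact that $\phi$ is a semigroup homomorphism; this places $\tilde\phi$ in $\mbox{Aut}_0 R$. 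The structural point is that $\phi \mapsto (\mbox{Inn }R)\tilde\phi$ is a genuine group homomorphism $\mbox{Aut }S \to \mbox{Out }R$, which holds because $\widetilde{\phi\psi}=\tilde\phi\,\tilde\psi$ on the nose, and that $\Phi((\mbox{Inn }R)\tilde\phi)=\phi$ by construction, so it is a section of $\Phi$. A split short exact sequence of groups then yields $\mbox{Out }R \cong H^1_{(\alpha,\xi)}(S,D^*) \ltimes \mbox{Aut }S$, with $\mbox{Aut }S$ acting on the normal factor by conjugation through the section. The conceptual content is light here; the main obstacle is bookkeeping rather than difficulty, and I expect the delicate step to be the reduction to the untwisted model and the verification that the candidate section is a homomorphism of groups—not merely a set-theoretic right inverse of $\Phi$—since both the splitting and the resulting semidirect-product decomposition hinge on that homomorphism property.
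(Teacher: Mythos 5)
Your proposal is correct and follows essentially the same route as the paper: the first sequence is assembled by citing the preceding lemmas ($\Lambda$ injective, $\mbox{Im }\Lambda=\mbox{Ker }\Phi$, $\mbox{Im }\Phi=\mbox{Stab}_{[\alpha,\xi]}(\mbox{Aut }S)$), the trivial class case reduces to $(\alpha,\xi)=(1_D,1)$, and the splitting is given by exactly the paper's section $\phi \mapsto (\mbox{Inn }R)\sigma_\phi$ with $\sigma_\phi(ds)=d\phi(s)$. The points you flag as delicate (the reduction to the untwisted model and the verification that the section is a group homomorphism) are precisely the steps the paper treats as routine checks, so there is no substantive divergence.
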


\begin{proof}
By Lemma \ref{rightiso}, we may assume $(\alpha, \xi)$ is normal.  The previous lemmas of this section prove the statement regarding the first short exact sequence, so it enough to show the sequence splits in the case that $[\alpha, \xi]=[1_D, 1]$.  We can assume that $(\alpha, \xi)=(1_D, 1)$.  In this case $s (dt)=ds \cdot t \in R$ for all $d \in D$ and $s, t \in S$.  Given $\phi \in \mbox{Aut }S$, define the map $\sigma_{\phi}$ by
$$ \sigma_{\phi} (ds)=d \phi(s)$$
for all $d \in D$ and $s\in S$ and extend linearly.  An easy check shows that $\sigma_{\phi} \in \mbox{Aut}_0 R$.  Now define a map $\Psi: \mbox{Aut }S \rightarrow \mbox{Out } R$ by
$$\Psi: \phi \rightarrow (\mbox{Inn } R) \sigma_{\phi}$$
for all $\phi \in \mbox{Aut }S$.  A straightforward check shows $\Psi$ to be a homomorphism of groups with $\Phi \Psi=1_{S} \in \mbox{Aut } S$.  Then the sequence is split and 
$$ \mbox{Out }R \cong H^1_{(\alpha, \xi)} (S, D^*) \rtimes \mbox{Aut }S.$$
\end{proof}

As in the algebra case, we have the following immediate corollaries to Theorem \ref{SES}.

\begin{corollary}
Let $R \cong D^{\alpha}_{\xi} S$ be a square-free ring.  If $H^1_{(\alpha, \xi)}(S, D^*)=1$, then
$$ \mbox{Out } R \cong \mbox{Stab}_{[\alpha, \xi]} (\mbox{Aut }S).$$
Moreover, if $[\alpha, \xi]=[1_D, 1]$, then
$$ \mbox{Out } R \cong\mbox{Aut }S.$$
\end{corollary}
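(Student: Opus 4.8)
The plan is to read both assertions directly off the exact sequence furnished by Theorem \ref{SES}; no new construction is required. First I would invoke that theorem to obtain the short exact sequence
$$1 \longrightarrow H^1_{(\alpha, \xi)} (S, D^*) \overset{\Lambda}{\longrightarrow} \mbox{Out } R \overset{\Phi}{\longrightarrow} \mbox{Stab}_{[\alpha, \xi]}(\mbox{Aut } S) \longrightarrow 1,$$
which is available after replacing $(\alpha, \xi)$ by a normal representative, as permitted by Lemma \ref{rightiso}.

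Under the hypothesis $H^1_{(\alpha, \xi)}(S, D^*) = 1$ the leftmost nontrivial term of the sequence collapses to the trivial group. Exactness at $\mbox{Out } R$ then gives $\mbox{Ker } \Phi = \mbox{Im } \Lambda = 1$, so $\Phi$ is injective, while exactness at $\mbox{Stab}_{[\alpha, \xi]}(\mbox{Aut } S)$ records that $\Phi$ is surjective. Hence $\Phi$ is an isomorphism and $\mbox{Out } R \cong \mbox{Stab}_{[\alpha, \xi]}(\mbox{Aut } S)$, which is the first assertion. The only thing worth flagging is that the vanishing of $H^1$ is precisely what promotes the surjection $\Phi$ onto the stabilizer to a two-sided inverse.

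For the ``moreover'' clause I would combine the isomorphism just obtained with the computation of the stabilizer already recorded in Theorem \ref{SES}: when $[\alpha, \xi] = [1_D, 1]$ one has $\mbox{Stab}_{[\alpha, \xi]}(\mbox{Aut } S) = \mbox{Aut } S$. Substituting this identification into $\mbox{Out } R \cong \mbox{Stab}_{[\alpha, \xi]}(\mbox{Aut } S)$ yields $\mbox{Out } R \cong \mbox{Aut } S$. There is essentially no obstacle here—the entire content lives in Theorem \ref{SES} and its supporting lemmas—so the only care needed is bookkeeping: keeping straight that the first statement uses only $H^1_{(\alpha,\xi)}(S,D^*)=1$, while the second statement additionally invokes $[\alpha, \xi] = [1_D, 1]$ in order to identify the stabilizer with all of $\mbox{Aut } S$.
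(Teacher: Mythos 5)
Your proposal is correct and matches the paper's intent exactly: the paper states this as an ``immediate corollary'' of Theorem \ref{SES} with no written proof, and the argument it has in mind is precisely yours---collapse the left term of the short exact sequence to make $\Phi$ an isomorphism onto $\mbox{Stab}_{[\alpha,\xi]}(\mbox{Aut }S)$, then use the identification $\mbox{Stab}_{[\alpha,\xi]}(\mbox{Aut }S)=\mbox{Aut }S$ from Theorem \ref{SES} when $[\alpha,\xi]=[1_D,1]$. Your bookkeeping remark (that the second claim uses the triviality of $H^1$ together with the identification of the stabilizer) is also the right reading of the statement.
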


\begin{corollary}
Let $R \cong D^{\alpha}_{\xi} S$ be a square-free ring.  Then every automorphism of $R$ is inner if and only if
$$ H^1_{(\alpha, \xi)}(S, D^*)= \mbox{Stab}_{[\alpha, \xi]} (\mbox{Aut }S)=1.$$
\end{corollary}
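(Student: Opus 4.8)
The plan is to deduce the corollary directly from the short exact sequence of Theorem~\ref{SES} together with one elementary fact about groups. The first step is to unpack the phrasing: the assertion ``every automorphism of $R$ is inner'' means precisely that $\mbox{Aut } R=\mbox{Inn } R$, which is to say that the quotient
$$\mbox{Out } R=\mbox{Aut } R/\mbox{Inn } R$$
is the trivial group. Thus the corollary is equivalent to the claim that $\mbox{Out } R=1$ if and only if both $H^1_{(\alpha, \xi)}(S, D^*)=1$ and $\mbox{Stab}_{[\alpha, \xi]}(\mbox{Aut }S)=1$, and it is this reformulation that I would prove.

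Next I would recall the exact sequence furnished by Theorem~\ref{SES},
$$1 \longrightarrow H^1_{(\alpha, \xi)} (S, D^*) \overset{\Lambda}{\longrightarrow } \mbox{Out }R \overset{\Phi}{\longrightarrow } \mbox{Stab}_{[\alpha, \xi]} (\mbox{Aut } S) \longrightarrow 1,$$
in which $\Lambda$ is a monomorphism with $\mbox{Im }\Lambda=\mbox{Ker }\Phi$ and $\Phi$ is surjective. The corollary then follows from the general principle that in any short exact sequence of groups $1\to A\to B\to C\to 1$ the middle term $B$ is trivial exactly when both $A$ and $C$ are trivial. For the forward implication, if $\mbox{Out } R=1$ then the injectivity of $\Lambda$ forces $H^1_{(\alpha, \xi)}(S, D^*)=1$, while the surjectivity of $\Phi$ forces $\mbox{Stab}_{[\alpha, \xi]}(\mbox{Aut }S)=1$. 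For the converse, if $\mbox{Stab}_{[\alpha, \xi]}(\mbox{Aut }S)=1$ then exactness gives $\mbox{Out } R=\mbox{Ker }\Phi=\mbox{Im }\Lambda$, and if moreover $H^1_{(\alpha, \xi)}(S, D^*)=1$ then $\mbox{Im }\Lambda$ is trivial, so $\mbox{Out } R=1$.

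I expect no genuine obstacle here, since all the substantive work has already been carried out in constructing $\Lambda$ and $\Phi$ and establishing exactness in Theorem~\ref{SES}; the corollary is purely a matter of reading off triviality of the three terms of the sequence. The only point meriting care is the translation recorded in the first paragraph, namely that ``every automorphism is inner'' is synonymous with $\mbox{Out } R=1$, after which the elementary group-theoretic step completes the argument in both directions simultaneously.
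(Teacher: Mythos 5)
Your proposal is correct and matches the paper's intent exactly: the paper states this as an immediate corollary of Theorem~\ref{SES}, with the implicit argument being precisely the one you give, namely that in the short exact sequence $1 \to H^1_{(\alpha,\xi)}(S,D^*) \to \mbox{Out } R \to \mbox{Stab}_{[\alpha,\xi]}(\mbox{Aut } S) \to 1$ the middle term is trivial if and only if both outer terms are. Your translation of ``every automorphism is inner'' into $\mbox{Out } R = 1$ and the two-directional reading of the sequence are exactly what the paper leaves to the reader.
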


\begin{example}
Let D be a division ring that has an automorphism $\gamma$ that is not inner and let $S$ be a square-free semigroup with the following Hasse diagram

\begin{center} 
\vspace{.1cm}
 \xymatrix{		& \hspace{1cm} &		&	& e_{4}     \\
 \hspace{.2cm}  & \hspace{1cm}&		& e_{2}  \ar[ur]^{s_{24}}	&   		&  e_{3}   \ar[ul]_{s_{34}} \\ 
 			& \hspace{1cm}&   		&  		&  e_{1}  \ar[ur]_{s_{12}} \ar[ul]^{s_{13}}	}
\end{center}
and multiplication defined by $s \cdot t \neq \theta$ only if $s$ or $t$ in $E$.  Define $\alpha \in F^1(S, \mbox{Aut}(D))$ by 
$$ \alpha_s=\begin{cases} \gamma   & \quad \mbox{if} \quad s=s_{34}\vspace{.2cm}   \cr  1 & \quad \mbox{otherwise} 
\end{cases}$$
and take $\xi=1$.  For $s=s_{ij}$ denote the maps $\alpha_s$ and $\mu_s$ by $\alpha_{ij}$ and $\mu_{ij}$ respectively.

By Corollary 3.5 of \cite{D'Ambrosia}, $R =D^{\alpha}_{\xi} S$ is a square-free ring not isomorphic to $D \otimes_K A$, where $K$ is the center of $D$ and $A$ is a square-free $K$-algebra.

Both $Z^1_{(\alpha, \xi)}(S, D^*)$ and $B^1_{(\alpha, \xi)}(S, D^*)$ can be determined using the semigroup structure of $S$ and the defining properties of each group.  For $(\mu, \eta) \in Z^1_{(\alpha, \xi)}(S, D^*)$ we must have $\eta(e_i)=1$ for $i=1, 2, 3 ,4$ and no restrictions on $\eta(s)$ for $s \notin E$.  For $\mu$, the maps are determined by the idempotent $e$ for any $s=e \cdot s \cdot f \in S$.  But since $\alpha_s=1$ for all but one $s \in S$, the identity $\mu_j=\rho_{ \mu_i( \eta(s_{ij})}$ implies that $\mu$ is determined by $\mu_1$ and our previous choices for $\eta(s)$.  Then 
$$Z^1_{(\alpha, \xi)}(S, D^*) = \mbox{Aut }D \ltimes \Pi_{i=1}^4 D^*.$$

A similar procedure for $(\mu, \eta) \in B^1_{(\alpha, \xi)}(S, D^*)$ shows that $\eta$ is determined by $\epsilon: E \rightarrow D^*$ and this determines $\mu$ as well since $\mu_{ij}=\rho_{\epsilon(e_i)}$.  Therefore, 
$$B^1_{(\alpha, \xi)}(S, D^*) = \Pi_{i=1}^4 D^*.$$
and $$H^1_{(\alpha, \xi)}(S, D^*) = \mbox{Aut }D .$$
A tedious but straightforward check shows that $\mbox{Stab}_{[\alpha, \xi]}(\mbox{Aut }S)=\mbox{Aut }S \cong \mathbb{Z}_2$.  Hence, to $R$ we may associate the following short exact sequence
\begin{align*}
1 \longrightarrow \mbox{Aut }D \longrightarrow  \mbox{Out }R
\longrightarrow  \mathbb{Z}_2 \longrightarrow 1.
\end{align*}

\end{example}

\section*{Acknowledgement}

The author would like to thank Professor Frank Anderson for his continued guidance and inspiration.

\end{document}